\newtheorem{theorem}{Theorem}[section]
\newtheorem{remark}[theorem]{Remark}
\newtheorem{proposition}[theorem]{Proposition}
\newtheorem{corollary}[theorem]{Corollary}
\newtheorem{lemma}[theorem]{Lemma}
\newtheorem{assumption}[theorem]{Assumption}
\newcommand{\sy}[1]{{\color{blue} #1}}
\title{Robust Decentralized Control of Coupled Systems via Risk Sensitive Control of Decoupled or Simple Models with Measure Change
}
\author{Zachary Selk\footnote{Department of Mathematics and Statistics, Queen's University. zachary.selk@queensu.ca}\and Serdar Y\"uksel\footnote{Department of Mathematics and Statistics, Queen's University. yuksel@queensu.ca} }
\date{\today}
\begin{document}

\maketitle
\begin{abstract}
 Decentralized stochastic control problems with local information involve problems where multiple agents and subsystems which are coupled via dynamics and/or cost are present. Typically, however, the dynamics of such couplings is complex and difficult to precisely model, leading to questions on robustness in control design. Additionally, when such a coupling can be modeled, the problem arrived at is typically challenging and non-convex, due to decentralization of information. In this paper, we develop a robustness framework for optimal decentralized control of interacting agents, where we show that a decentralized control problem with interacting agents can be robustly designed by considering a risk-sensitive version of non-interacting agents/particles. This leads to a tractable robust formulation where we give a bound on the value of the cost function in terms of the risk-sensitive cost function for the non-interacting case plus a term involving the ``strength" of the interaction as measured by relative entropy. We will build on Gaussian measure theory and an associated variational equality. A particular application includes mean-field models consisting of (a generally large number of) interacting agents which are often hard to solve for the case with small or moderate numbers of agents, leading to an interest in effective approximations and robustness. By adapting a risk-sensitivity parameter, we also robustly control a non-symmetrically interacting  problem with mean-field cost by one which is symmetric with a risk-sensitive criterion, and in the limit of small interactions, show the stability of optimal solutions to perturbations. 

\end{abstract}
\section{Introduction}

 Decentralized stochastic control involves problems where multiple agents and subsystems are coupled via dynamics and/or cost and agents have local information. Typically,  the dynamics of such couplings is difficult to precisely model, leading to questions on robustness. Additionally, when such a coupling can be modelled, the problem arrived at is typically non-convex and difficult to solve due to decentralization in information for which classical tools in stochastic control, such as dynamic programming are no longer directly applicable (see e.g. \cite{wit68,YukselWitsenStandardArXiv}). 

  In this paper, we develop a framework for risk-sensitive control for the study of decentralized stochastic control problems with uncertain interactions among a finite or infinite collection of interacting particles. 
  
  Risk-sensitive stochastic control has been shown to be an effective paradigm for robust stochastic control, with intrinsic connections between large deviations and relative entropy; seminal studies in the stochastic domain include \cite{dupuis2000robust,dupuis2000kernel,anantharam2017variational}. Rather than minimizing an average ``risk neutral" cost $N(\gamma):=E[C(\gamma)]$, one considers the ``risk sensitive" cost $S(\gamma):=\log E[e^{C(\gamma)}]$. By punishing deviations more harshly through exponentiation, minimizing $S$ leads to policies that are ``robust" to perturbations of the model. More precisely, for each control $\gamma$, and under suitable conditions on $C$, we have the variational equality
\begin{align}\label{DVVarEq}
\log E_{\mu_0}[e^{C(\gamma)}]=\sup_{\mu \sim \mu_0}\{E_\mu[C(\gamma)]-D_{KL}(\mu||\mu_0)\},
\end{align}
where $D_{KL}$ is the KL-divergence or relative entropy and $\sim$ denotes equivalence (mutual absolute continuity) of measures. In \cite{dupuis2000robust}, the authors related risk neutral control of a system driven by ``true noise" with a risk sensitive control of a system driven by ``nominal noise". That is, if $\mu_0$ is the law of some ``nominal" noise driving the system and $\mu$ is the law of the actual noise of the system, then we have the bound
\[E_\mu[C(\gamma)]\leq \log E_{\mu_0}[e^{C(\gamma)}]+D_{KL}(\mu||\mu_0).\]

We note that for risk sensitive control problems, there is also a direct relationship with a zero-sum formulation between the controller and disturbance \cite{jacobson1973optimal,basbern,bacsar2021robust}. These approaches (to robustness) are complementary; the former uses a variational approach and the latter typically builds on the direct equivalence of the associated Hamilton-Jacobi-Bellman equations.
 
 We study decentralized stochastic control problems, where the (sometimes poorly modeled) interaction among agents leads to two types of challenges: (i) When the mathematical model is known precisely, lack of convexity arising from decentralized information structure makes the analysis challenging \cite{charalambous2016decentralized,huang2022general,jackson2023approximately,saldiyukselGeoInfoStructure,wit68,YukselSaldiSICON17}), and (ii) when the mathematical model is not known precisely, a natural robustness question arises.

In the context of our paper, a particular setup involves mean-field theory. Mean field theory \cite{carmona2016mean,CainesMeanField3,CainesMeanField2,lacker2015mean,LyonsMeanField} models control problems when there are (a generally large number of) interacting agents. One key observation in mean field theory is that, under some conditions, for large enough systems of agents one can approximate all interactions through a mean field coupling term - that is, each agent interacts with the average behavior of all agents. However, due to this coupling, mean field control problems are often very hard to solve when the number of agents is not large or when the agents apply non-identical policies which prohibits concentration results for the asymptotically large number of agents regime (see \cite{sanjari2020optimality,sanjarisaldiy2024optimality,sanjarisaldiyuksel2023CTMF} on possible non-optimality of symmetric policies for finitely many agents but their optimality in the infinite limit). This leads to an interest in effective approximations and robust solutions.

Our approach builds on defining equivalent information structures via change of measure arguments, but unlike those in the literature starting with \cite{benevs1971existence}, we approach the equivalent information structures with the aim of attaining robustness to solutions of complex decentralized stochastic control problems. The common thread for change of measure arguments (see \cite{sanjari2021optimality} for a detailed review) is an absolute continuity condition involving a family of conditional probabilities on the measurement variables (given the past history or state realizations) with respect to a common reference measure. By a different approach and motivation, in this paper, we develop a framework for risk-sensitive control for the study of decentralized stochastic control problems with uncertain interactions among a finite or infinite collection of interacting particles. We will utilize tools from Gaussian measure theory and an associated variational equality.


In the decentralized information setup, we note that the robustness relationship presented in our paper cannot be obtained directly through an equivalence associated with a Hamilton Jacobi Bellman equation arrived at via a game theoretic formulation, as such an optimality condition is currently not available due to the decentralized information structure and does not typically arise (though optimality conditions can be obtained for convex teams) in team theoretic problems under information constraints when there are finitely many agents.

Our paper also motivates the study of risk-sensitive decentralized stochastic control, which has been studied in the recent literature \cite{bensoussan2017risk,moon2016linear,saldi2020approximate,saldi2022partially,tembine2013risk}.
 
{\bf Main Results.} The variational equality (\ref{DVVarEq}) can also be applied where the nominal reference measure $\mu_0$ is the law of non-interacting agents and $\mu$ is the law of interacting agents. In this paper, we propose to use this variational equality to approximate a risk neutral interacting agent mean field control problem by a risk sensitive non-interacting control problem. Theorem \ref{theorem:main} presents a robustness result under a more relaxed energy condition but with a mean-field coupling, and Theorem \ref{theorem:main-Gaussian-measure} presents the robustness result for decentralized systems under arbitrary coupling but with a more stringent energy condition. 


By adapting a risk-sensitivity parameter, we also (i) robustly control a non-symmetrically interacting  problem with mean-field cost by one which is symmetric with a risk-sensitive criterion, and (ii) in the limit of small interactions, show the stability of optimal solutions to perturbations. 

We also discuss optimal risk sensitivity. By introducing a parameter $\alpha\in \mathbb R$ which quantifies the ``amount" of risk aversion, we can get the bound
\begin{equation}\label{eq:main-result-alpha}
E_\mu[C(\gamma)]\leq \frac{1}{\alpha}\log E_{\mu_0}[e^{\alpha C(\gamma)}]+\frac{1}{\alpha}D_{KL}(\mu||\mu_0).\end{equation}
By optimizing over $\alpha$ we obtain Theorem \ref{thmoptimizeparam}. 

As a corollary, we show how we can recover the true model in the small interaction limit by selecting $\alpha$ appropriately. We additionally show how we can approximate a non-symmetric problem by a symmetric problem. See Section \ref{sec:optimal-risk-sensitivity} for details and additional implications, including on stability of risk-sensitive optimal solutions to small interactions.

\section{Problem Setup}
We consider two system models, one with a perturbation of the measure on the agents and the other with a perturbation on the noise (similar to \cite{dupuis2000robust}). \\

{\bf System Model A.} Consider a system of interacting controlled differential systems (particles) where each $X^m_t; t \in [0,T], m=1,\cdots, N$ is $\mathbb{R}^n$-valued
\begin{align}
    d\tilde X^1_t&=(b_1(\tilde X^1_t,U^1_t) +v_t^1) dt+\sigma_1(\tilde X^1_t) dB^1_t \nonumber \\
    &\vdots \nonumber \\
     d\tilde X^N_t&=(b_N(\tilde X_t^N,U^N_t) +v_t^N) dt+\sigma_N(\tilde X^N_t) dB^N_t, 
     \label{trueModel}
\end{align}
with $X^{[1,N]}_0=x^{[1,N]}_0$ and where the $v^i$ (which may be incorrectly modelled) depend on the entire ensemble $(\tilde X^1,...,\tilde X^N)$ so that $v^i_t$ is $\sigma(\{X^{[1,N]}_{[0,t]}\})$-measurable,
 and satisfies $E\left[\int_0^T v_t^2 dt\right]<\infty.$ With $\mathbb{U}$ a compact metric space, $U^m_t$ is $\mathbb{U}$ valued for each $m =1,\cdots, N$ and $t \in [0,T]$
 \\\\
 {\bf System Model B.} Consider a system of interacting controlled differential systems (particles)
\begin{align}
    d\tilde X^1_t&=(b_1(\tilde X^1_t,U^1_t) +\sigma_1(\tilde X^1_t)v_t^1) dt+\sigma_1(\tilde X^1_t) dB^1_t \nonumber \\
    &\vdots \nonumber \\
     d\tilde X^N_t&=(b_N(\tilde X_t^N,U^N_t) +\sigma_N(\tilde X^N_t)v_t^N) dt+\sigma_N(\tilde X^N_t) dB^N_t, \label{trueModel-B}
 \end{align}
with $X^{[1,N]}_0=x^{[1,N]}_0$ and where the $v^i$ depend on the entire noise $(B^1,...,B^N)$ so that $v^i_t$ is $\sigma(\{B^{[1,N]}_{[0,t]}\})$-measurable,
 and satisfies $E\left[\int_0^T v_t^2 dt\right]<\infty.$ 


Here $B^i$ are i.i.d. Brownian motions, $U^i_t$ are control action variable processes generated under control policies $\gamma^i \in \Gamma^i$, where $\Gamma^i$ is the set of admissible policies:

In the setup to be considered the admissible policies may be decentralized or centralized. In the decentralized setup, we view each $X^i$, $1 \leq i \leq N$, to be controlled locally (subject to the coupling), such that at each time $U^i_t$ is a possibly ${\cal P}(\mathbb{U})$-valued $\sigma(X^i_{[0,t]})$-measurable control. In the centralized setup, we assume that $U^i_t$ is a possibly ${\cal P}(\mathbb{U})$-valued $\sigma(X^{1,N}_{[0,t]})$-measurable control.  

Our focus will be on the decentralized setup, as the results are more consequential given the difficulties associated with optimal decentralized stochastic control with local information. 

As noted in \cite[Chapter 2]{arapostathis2012ergodic} (see also \cite[Section 4]{pradhanyuksel2023DTApprx}) under such (decentralized or centralized) feedback policies, the question on the existence of strong solutions leads to several subtleties. Accordingly, instead of feedback policies, one may also consider policies which are probability measure-valued and adapted (as measure-valued controls) to the local driving noise so that the policies are non-anticipative: thus, one requires that $\{(U^i_s,B^i_s), s \leq t\}$ be independent of $B^i_r - B^i_t, r > t$ for all $t \in [0,T]$, and also independent of $B^m_t, t \in [0,T]$ for $i \neq m, m=1,2,\cdots,N$. Accordingly, in the above, standard Lipschitz growth conditions \cite[Theorem 2.2.4]{arapostathis2012ergodic} (see also \cite[Section 4]{pradhanyuksel2023DTApprx}) are to be imposed on $b_i,\sigma_i$ to ensure the existence of strong solutions to the stochastic differential equations above under the policies considered. This is needed for change of measure arguments to be made in our analysis (so that policies can also be expressed as a function of the driving noise validating the change of measure equations to be studied). Accordingly, we impose the following assumptions on the drift $b_i$ and the diffusion matrices $\sigma_i$. 
\begin{itemize}
\item[(A1)]
\emph{Lipschitz continuity:\/}
The function
$\sigma_i: \mathbb{R}^{d}\to\mathbb{R}^{d\times d}$,
$b_i: \mathbb{R}^d \times\mathbb{U}^i \to\mathbb{R}^d$ are Lipschitz continuous in $x$ (uniformly with respect to the control action for $b$). In particular, for some constant $C>0$
depending on $R>0$, we have
\begin{equation*}
\| b_i(x,u) - b_i(y, u)\|^2 + \|\sigma_i(x) - \sigma_i(y)\|^2 \,\le\, C\,\|x-y\|^2
\end{equation*}
for all $x,y\in \mathbb{R}^n$ and $u \in\mathbb{U}^i$, where $\| \sigma_i\| := \sqrt{\mathrm{Trace}(\sigma_i\sigma_i^T)}$. 
We also assume that $b$ is jointly continuous in $(x,u)$.

\medskip
\item[(A2)]
\emph{Affine growth condition:}
$b_i$ and $\sigma_i$ satisfy a global growth condition of the form
\begin{equation*}
\sup_{u \in\mathbb{U}^i } |\langle b_i(x,u),x\rangle| +  \|\sigma_i(x)\|^{2} \leq  C_0 (1 + \|x\|^{2})  \qquad \forall x \in \mathbb{R}^{n},
\end{equation*}
for some constant $C_0>0$.
\end{itemize}

We consider the following cost criteria.\\  

{\bf General Cost with $N < \infty$.}

\begin{equation*}
    J_N(\underline \gamma_N)=E^{\underline \gamma_N}_{\mu_0^N} \left[c_N(X^{[1,N]},U^{[1,N]})\right],
\end{equation*}

with
\begin{equation}\label{generalCostFnD}
c_N(X^{[1,N]},U^{[1,N]}) = \int_0^{T} g(X_s^{[1,N]},U_s^{[1,N]}) ds
\end{equation}
where $\underline \gamma_N=(\gamma^1,...,\gamma^N)$ with $U^i_t=\gamma^i(X^i_{[0,t]})$. \\\\

We will also consider a more specialized mean field symmetric cost. This will allow for symmetric interactions in our system and more relaxed conditions.\\

{\bf Mean-Field Symmetric Cost with $N < \infty$ or $N=\infty$.}

Consider
\begin{equation*}
    J_N(\underline \gamma_N)=E^{\underline \gamma_N}_{\mu_0^N} \left[c_N(X^{[1,N]},U^{[1,N]})\right],
\end{equation*}

with
\begin{equation}\label{generalCostFnMF}
c_N(X^{[1,N]},U^{[1,N]}) = \frac{1}{N} \sum_{i=1}^N c\left(X^i, U^i, \frac{1}{N}\sum_{m=1}^N X^m,\frac{1}{N}\sum_{m=1}^N U^m\right)
\end{equation}

where
\[c\left(X^i, U^i, \frac{1}{N}\sum_{m=1}^N X^m,\frac{1}{N}\sum_{m=1}^N U^m\right) = \int_0^{T} g(X^i_s, U^i_s, \frac{1}{N}\sum_{m=1}^N X^m_s, \frac{1}{N}\sum_{m=1}^N U^m_s) ds \]

where $\underline \gamma_N=(\gamma^1,...,\gamma^N)$ with $U^i_t=\gamma^i(X^i_{[0,t]})$. We are also interested in the limit
\begin{equation}\label{eq:J-def}
    J(\underline \gamma)=\limsup_{N\to\infty} J_N(\underline \gamma).
\end{equation}
We note that the cost can be made more general: Let for Borel $B \in \mathbb{U}$,
\[\kappa^N(B) = \frac{1}{N}\sum_{p=1}^{{N}} \delta_{U^{i}}(B)\]
be the empirical measure involving control actions, and for Borel $D \in \mathbb{R}^n$
\[\zeta^N(B) = \frac{1}{N}\sum_{p=1}^{{N}} \delta_{X^{i}}(D)\]
be the measure for local states. We could have $c$ above be of the form $c(X^i, U^i,\zeta^N, \kappa^N)$ as well. 

The above will be called either \textbf{true model A} or \textbf{true model B} and be associated with measure $\mu$ under control policy $\underline \gamma_N$. \\

  {\bf A Nominal (Reference) Model with Non-Interacting Agents.} Consider now the nominal system of non-interacting agents given by the collection of stochastic differential equations
\begin{equation*}
\begin{cases}
    dX^1_t&=b_1(X^1_t,U^1_t) dt+\sigma_1(X^1_t) dB^1_t \nonumber \\
    &...\nonumber \\
     dX^N_t&=b_N(X^N_t,U^N_t) dt+\sigma_N(X^N_t) dB^N_t,\label{referenceModel}
     \end{cases}
\end{equation*}
with $X^{[1,N]}_0=x^{[1,N]}_0$. The same assumptions on either of the true models apply here.

The rest of this paper is dedicated to showing a robustness theorem for $J(\underline \gamma)$ where risk-neutral stochastic optimal control with weakly interacting agents may be bounded by risk-sensitive stochastic optimal control with non interacting agents. 

\section{Robust Decentralized Control Design for Coupled Systems via Risk Sensitive Control of Non-Interacting Systems}

We will consider two classes of models, in the first one the coupling will be via mean-field effects and in the second one the coupling will be more general. 

In the former setup, a total energy constraint will be absent unlike the latter setup. In particular, we are interested in proving a robustness bound for $J$ without the Novikov assumption \eqref{eq:Novikov} to be presented below. This will be achieved by utilizing the symmetry in the model, and thus avoid the total energy constraint. Later on, we will consider a more general coupling structure under a more restrictive moment constraint.  

First we present a variational formula involving Cameron-Marton spaces and a formula for relative entropy.

\subsection{A general variational formula and relative entropy}

First, we have the following result for the Cameron-Martin (see e.g. \cite[Chapter 3]{Hairer-Notes}) space of countable product spaces. 
\begin{theorem}\label{theorem:CM-for-product}
    Let $\mu_n, n \in \mathbb{N}$ be centered Gaussian measures on the countable collection of Banach spaces $\mathcal B_n$ with Cameron-Martin spaces $\mathcal H_{\mu_n}$. Consider the Fr\'echet space $\mathcal F =\Pi_{n=1}^\infty \mathcal B_n$ with product topology and product measure $\mu$. Then the Cameron-Martin space is 
    \begin{equation*}
        \mathcal H_\mu=\{\mathbf v=\{v_n\}_{n\in \mathbb N}:\|\mathbf v\|_\mu^2=\sum_{n=1}^\infty \|v_n\|_{\mu_n}^2<\infty\}.
    \end{equation*}
\end{theorem}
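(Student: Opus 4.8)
The plan is to characterize the Cameron-Martin space $\mathcal{H}_\mu$ directly from its definition as the set of admissible shifts, i.e., those $\mathbf{v} \in \mathcal{F}$ for which the translated measure $\mu(\cdot - \mathbf{v})$ is absolutely continuous with respect to $\mu$, equipped with the norm $\|\mathbf{v}\|_\mu = \sup\{\ell(\mathbf{v}) : \ell \in \mathcal{F}^*, \, \|\ell\|_{L^2(\mu)} \le 1\}$ (equivalently, the closure of the range of the covariance operator in the appropriate inner product). Since $\mu = \bigotimes_{n=1}^\infty \mu_n$ is a product measure on the Fréchet space $\mathcal{F} = \prod_n \mathcal{B}_n$, I would first recall that the dual $\mathcal{F}^*$ consists of finite linear combinations $\ell = \sum_{n=1}^{K} \ell_n$ with $\ell_n \in \mathcal{B}_n^*$, and that for such $\ell$ the random variables $\ell_n$ are independent under $\mu$, so $\|\ell\|_{L^2(\mu)}^2 = \sum_{n=1}^K \|\ell_n\|_{L^2(\mu_n)}^2$ (using that each $\mu_n$ is centered, so cross terms vanish).

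The main computation is then to identify $\|\mathbf{v}\|_\mu^2 = \sup_{\ell} \{ (\sum_n \ell_n(v_n))^2 : \sum_n \|\ell_n\|_{L^2(\mu_n)}^2 \le 1 \}$. For a fixed finite collection of levels, by Cauchy-Schwarz over the index $n$ one has $(\sum_n \ell_n(v_n))^2 \le (\sum_n \|\ell_n\|_{L^2(\mu_n)}^2)(\sum_n \|v_n\|_{\mu_n}^2)$, where $\|v_n\|_{\mu_n} = \sup\{\ell_n(v_n) : \|\ell_n\|_{L^2(\mu_n)} \le 1\}$ is exactly the Cameron-Martin norm of $v_n$ in $\mathcal{B}_n$; and this bound is attained in the limit by allocating the unit budget proportionally to $\|v_n\|_{\mu_n}$. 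Taking the supremum over $K$ gives $\|\mathbf{v}\|_\mu^2 = \sum_{n=1}^\infty \|v_n\|_{\mu_n}^2$, with finiteness of this sum being exactly the condition for $\mathbf{v}$ to lie in $\mathcal{H}_\mu$. One also needs the converse inclusion: if the sum is finite, each $v_n \in \mathcal{H}_{\mu_n}$, and the shift is admissible — this follows because the Radon-Nikodym density factors as an (a.s. convergent) infinite product of the per-coordinate Cameron-Martin densities, whose convergence in $L^1(\mu)$ is governed precisely by $\sum_n \|v_n\|_{\mu_n}^2 < \infty$ (the log-densities are independent with variances $\|v_n\|_{\mu_n}^2$, so Kolmogorov's three-series / martingale convergence applies).

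The step I expect to be the main obstacle is the rigorous treatment of the infinite product of densities and the associated measurability/convergence issues on the Fréchet space: one must verify that the formal product $\prod_n \exp(\langle v_n, \cdot\rangle_{\mathcal{H}_{\mu_n}} - \tfrac12 \|v_n\|_{\mu_n}^2)$ converges in $L^1(\mu)$ to a genuine density and that no mass escapes (i.e. the limit is not identically zero), which is where the $\ell^2$-summability condition is truly used rather than merely each $v_n$ being admissible. A clean way to organize this is via the martingale $M_K = \prod_{n=1}^K (\text{density}_n)$, showing it is a nonnegative $L^1$-bounded martingale with $E[M_K] = 1$ and uniformly integrable iff $\sum_n \|v_n\|_{\mu_n}^2 < \infty$; uniform integrability then yields $M_K \to M_\infty$ in $L^1$ with $E[M_\infty] = 1$, giving the claimed density. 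I would also remark that Theorem~\ref{theorem:CM-for-product} could alternatively be deduced from the abstract fact that the Cameron-Martin space of a product of Gaussian measures is the Hilbert-space direct sum (not product) of the factor Cameron-Martin spaces, but the variational/martingale argument above is self-contained and matches the tools emphasized in the paper.
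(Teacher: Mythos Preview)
Your proposal is correct and rests on the same two observations the paper uses: (i) the dual of $\mathcal{F}=\prod_n \mathcal{B}_n$ with the product topology is the algebraic direct sum $\bigoplus_n \mathcal{B}_n^*$ (finite-support sequences), and (ii) by independence under the product measure, the $L^2(\mu)$-norm of such a dual element decomposes as $\sum_n \|\ell_n\|_{L^2(\mu_n)}^2$. The paper's proof stops there: it simply invokes the definition of $\mathcal{H}_\mu$ as (isometric to) the $L^2$-closure of the image of $\mathcal{F}^*$ and reads off the norm.

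Where you differ is in the level of detail and in the converse direction. You compute the Cameron--Martin norm through the equivalent variational formula $\|\mathbf v\|_\mu=\sup\{\ell(\mathbf v):\|\ell\|_{L^2(\mu)}\le 1\}$ and extract the sum via Cauchy--Schwarz, which is the same content unpacked. More substantively, you add a martingale/Kakutani argument to show that $\sum_n \|v_n\|_{\mu_n}^2<\infty$ genuinely implies admissibility of the shift (the infinite product of densities converges in $L^1$ to a probability density). The paper does not address this direction explicitly; its one-line proof takes for granted that the $L^2$-closure characterization already packages both inclusions. Your version is therefore more self-contained, at the cost of being longer; the paper's version is a sketch that leans on the reader knowing the standard Cameron--Martin machinery.
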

\begin{proof}
    The dual space, $\mathcal F^\ast$ can be identified with elements of $\Pi_{n=1}^\infty \mathcal B_n^\ast$ with finite support, with $\mathbf v^\ast \in \Pi_{n=1}^\infty \mathcal B_n^\ast$ being given by

    \[\mathbf v(\mathbf x)=\sum_{n=1}^\infty v_n(x_n)\] 
for $\mathbf x\in \Pi_{n=1}^\infty \mathcal B_n^\ast$. Let $j: \mathcal F^\ast \to L^2(\mathcal F, \mu)$ be the canonical embedding. Then $\mathcal H_\mu$ is the $L^2$ closure of $j(\mathcal F)$ under $\mu$ with norm $\|\mathbf v\|_\mu^2=\|j(\mathbf v)\|_\mu^2=\sum_{n=1}^\infty \|v_n\|_{\mu_n}^2$.
\end{proof}
\begin{remark}\label{remark:1}
    Given any measure on Fr\'echet space $\mathcal F$ there exists a compactly embedded Banach space $\mathcal B$ of full measure. Therefore, we may restrict to that compactly embedded Banach space. See e.g. \cite{Bogachev}, Theorem 3.5. Therefore any results stated in terms of Banach spaces hold in this setting. 
\end{remark}
\begin{theorem}\label{theorem:product-space}
 Let $\mu_n$ be centered Gaussian measures on the countable collection of Banach spaces $\mathcal B_n$ with Cameron-Martin spaces $\mathcal H_{\mu_n}$. Consider the Fr\'echet space $\mathcal F =\Pi_{n=1}^\infty \mathcal B_n$ with product topology, product measure $\mu$ (which may be reduced to a Banach space, given Remark \ref{remark:1}) and Cameron-Martin space  
    \begin{equation*}
        \mathcal H_\mu=\{\mathbf v=\{v_n\}_{n\in \mathbb N}:\|\mathbf v\|_\mu^2=\sum_{n=1}^\infty \|v_n\|_{\mu_n}^2<\infty\}.
    \end{equation*}
Suppose that $\mathbf v$ is a sequence of (potentially random) elements so that the Novikov condition 
\begin{equation}\label{eq:Novikov}
    E_\mu \left[ e^{\frac{1}{2} \|\mathbf v\|_\mu^2 } \right]=E_\mu \left[e^{\frac{1}{2} \sum_{n=1}^\infty \|v_n\|_{\mu_n}^2 }\right]<\infty
\end{equation}
holds. Consider a set of admissible policies $\Gamma=\prod_{n=1}^\infty \Gamma^n$.  Let $C:\mathcal F\times \Gamma \to \mathbb R$ be a map  so that for each $\gamma\in \Gamma$, the functional $ C(\cdot, \gamma)$ satisfies the finite entropy assumption $E_\mu\left[e^{C}|C|\right]<+\infty $. Then we have that 
\begin{align}\label{variationalForm1}
   \log E_\mu \left[e^{C(\cdot,\gamma)}\right]=\sup_{\nu \sim \mu} \left\{E_\nu[C]-D_{KL}(\nu||\mu)\right\},
\end{align}
and the supremum is achieved at $\mu^{\ast,\gamma}$ which has density
\begin{equation*}
    \frac{d\mu^{\ast,\gamma}}{d\mu}=\frac{e^{C(\cdot, \gamma)}}{E_\mu[e^{C(\cdot,\gamma)}]}.
\end{equation*}
\end{theorem}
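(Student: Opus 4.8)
The plan is to recognize \eqref{variationalForm1} as an instance of the Gibbs (Donsker--Varadhan) variational principle and to prove it at the level of an abstract probability space, so that the hypotheses on $\mathbf v$ (existence and the Novikov condition \eqref{eq:Novikov}) and the Cameron--Martin description of $\mathcal H_\mu$ serve only to fix the setting in which the identity will be applied later --- where $\mu$ is the product law of the driving Brownian motions and the drift $\mathbf v$ realizes, via Girsanov, the change of measure from the non-interacting to the interacting system --- while the identity \eqref{variationalForm1} itself requires only the finite-entropy condition $E_\mu[e^{C}|C|]<\infty$. First I would extract the elementary consequences of that condition: splitting the integral over $\{|C|\le 1\}$ and $\{|C|>1\}$ and using $e^{C}\le e$ on the former and $e^{C}\le e^{C}|C|$ on the latter shows $0<E_\mu[e^{C}]<\infty$, so the left-hand side is finite and the tilted measure $\mu^{\ast,\gamma}$ with $d\mu^{\ast,\gamma}/d\mu=e^{C(\cdot,\gamma)}/E_\mu[e^{C(\cdot,\gamma)}]$ is a genuine probability measure with strictly positive density, hence $\mu^{\ast,\gamma}\sim\mu$.

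Next I would prove that the right-hand side of \eqref{variationalForm1} never exceeds $\log E_\mu[e^{C}]$. Fixing $\nu\sim\mu$, I may assume $D_{KL}(\nu||\mu)<\infty$, the remaining case being handled by the usual convention together with a truncation $C\wedge M$; here the elementary bound $xy\le e^{x}+y\log y-y$ (Young's inequality for the conjugate pair $e^{x}$ and $y\log y-y$), applied with $x=C$ and $y=d\nu/d\mu$ and integrated against $\mu$, also shows $E_\nu[C]\le E_\mu[e^{C}]+D_{KL}(\nu||\mu)-1<\infty$ whenever the relative entropy is finite, so $E_\nu[C]$ is well defined. Then, writing $f=d\nu/d\mu$ and using concavity of the logarithm with Jensen's inequality under $\nu$,
\begin{equation*}
E_\nu[C]-D_{KL}(\nu||\mu)=\int\log\frac{e^{C}}{f}\,d\nu\le\log\int\frac{e^{C}}{f}\,d\nu=\log\int e^{C}\,d\mu=\log E_\mu[e^{C}],
\end{equation*}
and taking the supremum over admissible $\nu$ yields one direction.

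For the reverse direction I would evaluate the right-hand side at $\mu^{\ast,\gamma}$: the finite-entropy condition gives $E_{\mu^{\ast,\gamma}}[|C|]=E_\mu[e^{C}|C|]/E_\mu[e^{C}]<\infty$, so $D_{KL}(\mu^{\ast,\gamma}||\mu)=E_{\mu^{\ast,\gamma}}\big[\log(d\mu^{\ast,\gamma}/d\mu)\big]=E_{\mu^{\ast,\gamma}}[C]-\log E_\mu[e^{C}]$ is finite, and rearranging gives $E_{\mu^{\ast,\gamma}}[C]-D_{KL}(\mu^{\ast,\gamma}||\mu)=\log E_\mu[e^{C}]$; since $\mu^{\ast,\gamma}\sim\mu$ this measure is admissible in the supremum, which is therefore attained there. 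I expect the only genuinely delicate part to be the bookkeeping --- ensuring no quantity is an indeterminate $\infty-\infty$, and that restricting the supremum to the equivalence class $\{\nu\sim\mu\}$ rather than to all $\nu\ll\mu$ does not lower its value --- and the finite-entropy hypothesis resolves both at once, since it simultaneously bounds $E_\mu[e^{C}]$, makes $E_{\mu^{\ast,\gamma}}[C]$ absolutely convergent, and places the optimizer $\mu^{\ast,\gamma}$ inside the admissible class.
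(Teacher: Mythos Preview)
Your proof is correct and self-contained: you establish the Donsker--Varadhan variational identity directly via Jensen's inequality for the upper bound and an explicit evaluation at the Gibbs tilt $\mu^{\ast,\gamma}$ for the lower bound, with the finite-entropy hypothesis $E_\mu[e^{C}|C|]<\infty$ doing exactly the work you claim (finiteness of $E_\mu[e^{C}]$, absolute integrability of $C$ under $\mu^{\ast,\gamma}$, and equivalence $\mu^{\ast,\gamma}\sim\mu$). The bookkeeping concerns you flag --- ruling out $\infty-\infty$ and showing the restriction to $\nu\sim\mu$ does not lower the supremum --- are indeed the only subtle points, and your treatment handles them.

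By contrast, the paper does not argue any of this: its proof of the theorem is a single sentence invoking an external reference (the cited source \cite{BK}) once the product-space Cameron--Martin structure from Theorem~\ref{theorem:CM-for-product} and Remark~\ref{remark:1} is in place. So your approach is genuinely different in that it is a from-scratch derivation rather than a citation. What your argument buys is transparency about which hypotheses are actually used where --- in particular, you make explicit that the Novikov condition \eqref{eq:Novikov} and the Cameron--Martin description play no role in the identity \eqref{variationalForm1} itself and are only scaffolding for the downstream Girsanov application. What the paper's approach buys is brevity, at the cost of deferring the analytic content to the cited work.
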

\begin{proof}
Given the setup, the result follows directly from \cite{BK}. 
\end{proof}

We next present a supporting result on relative entropy.

\begin{lemma}\label{relativeEnt2ndMoment}
Let $v_s^n$ be a sequence of progressively measurable processes so that $E\left[e^{\frac12\sum_{n=1}^\infty \int_0^T (v_s^n)^2 ds}\right]<+\infty$, and let $\mu$ be the law of the shifted process $(B_t^1+\int_0^t v_s^1 ds,...)$, then
\begin{equation}\label{relativeEntBoundR}
D_{KL}(\mu||\mu_0) = \frac{1}{2}\sum_{n=1}^\infty E_\mu \left[\int_0^T (v_s^n)^2 ds.\right]
\end{equation}
\end{lemma}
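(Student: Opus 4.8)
The plan is to reduce the countable-product statement to the classical single-coordinate Girsanov/relative-entropy computation, using Theorem \ref{theorem:CM-for-product} (and Remark \ref{remark:1}) to make sense of the shift on the product space, and then to identify the Radon--Nikodym derivative explicitly so that the KL-divergence becomes an expectation of a log-likelihood ratio. First I would set $\mu_0$ to be the law of the standard Wiener process $(B^1,B^2,\dots)$ on the Fr\'echet space $\mathcal F=\prod_{n=1}^\infty C([0,T];\mathbb R^n)$ (reduced to a compactly embedded Banach space of full measure via Remark \ref{remark:1}), whose Cameron--Martin space is exactly the $\mathbf v$ with $\sum_n\int_0^T(v^n_s)^2\,ds<\infty$. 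The Novikov-type hypothesis $E\big[e^{\frac12\sum_n\int_0^T(v^n_s)^2\,ds}\big]<\infty$ is precisely what is needed (see Theorem \ref{theorem:product-space} and \cite{BK}) to guarantee that the truncated exponential martingales
\begin{equation*}
Z_t^{(k)}=\exp\Big(\sum_{n=1}^k\int_0^t v_s^n\,dB_s^n-\frac12\sum_{n=1}^k\int_0^t (v_s^n)^2\,ds\Big)
\end{equation*}
form a uniformly integrable family converging to a genuine density $Z_T$, and that $\mu\ll\mu_0$ with $\frac{d\mu}{d\mu_0}=Z_T$. Equivalently, by Girsanov, under $\mu$ the process $\big(B^n_t+\int_0^t v^n_s\,ds\big)_n$ has the law described, which matches the hypothesis that $\mu$ is the law of the shifted process.

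Next I would compute $D_{KL}(\mu\|\mu_0)=E_\mu[\log\frac{d\mu}{d\mu_0}]=E_\mu[\log Z_T]=E_\mu\Big[\sum_n\int_0^T v_s^n\,dB_s^n-\frac12\sum_n\int_0^T(v_s^n)^2\,ds\Big]$. The key cancellation is that the stochastic-integral term has zero expectation \emph{under $\mu$} once we rewrite $B^n$ in terms of the $\mu$-Brownian motion $\tilde B^n_t:=B^n_t+\int_0^t v^n_s\,ds$: namely $\int_0^T v_s^n\,dB_s^n=\int_0^T v_s^n\,d\tilde B_s^n-\int_0^T (v_s^n)^2\,ds$, so that
\begin{equation*}
\log Z_T=\sum_n\int_0^T v_s^n\,d\tilde B_s^n-\frac12\sum_n\int_0^T(v_s^n)^2\,ds,
\end{equation*}
and taking $E_\mu$ kills the $\tilde B$-martingale terms, leaving $D_{KL}(\mu\|\mu_0)=\frac12\sum_n E_\mu\big[\int_0^T(v_s^n)^2\,ds\big]$, which is the claim \eqref{relativeEntBoundR}. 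Along the way I would also note the alternative route: apply the variational equality \eqref{variationalForm1} of Theorem \ref{theorem:product-space} with $C=\log Z_T$; then $\mu=\mu^{\ast}$ is the optimizer, so $\log E_{\mu_0}[Z_T]=E_\mu[\log Z_T]-D_{KL}(\mu\|\mu_0)$, and since $E_{\mu_0}[Z_T]=1$ the left side is $0$, giving $D_{KL}(\mu\|\mu_0)=E_\mu[\log Z_T]$ directly — reducing everything to the same log-likelihood computation.

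The main obstacle is justifying the interchange of the infinite sum, the expectation, and the vanishing of the stochastic integral under $\mu$ when only the exponential-moment (Novikov) bound is available rather than an $L^2$ bound on $\mathbf v$ itself. Concretely, I need: (i) that $\int_0^\cdot v_s^n\,d\tilde B_s^n$ is a true $\mu$-martingale (not merely a local one) for each $n$, and that the partial sums over $n$ converge in $L^1(\mu)$; and (ii) that $\sum_n E_\mu[\int_0^T(v_s^n)^2\,ds]<\infty$, which follows because Jensen applied to the Novikov hypothesis gives $\frac12\sum_n E_\mu[\int_0^T(v_s^n)^2\,ds]\le \log E_\mu[e^{\frac12\sum_n\int_0^T(v_s^n)^2\,ds}]<\infty$ — wait, that inequality goes the wrong way under $E_\mu$ versus $E_{\mu_0}$, so instead I would get finiteness of the KL directly from $D_{KL}(\mu\|\mu_0)<\infty$ (a consequence of $\mu\ll\mu_0$ with the stated density and the entropy bound in \cite{BK}), and then use that finiteness to license the dominated/monotone convergence and the optional-stopping argument that make the martingale terms vanish. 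For (i), a standard localization plus uniform integrability argument (using that $\mathbf v\in\mathcal H_\mu$ $\mu$-a.s., which is forced by $D_{KL}(\mu\|\mu_0)<\infty$) suffices. I expect the clean write-up to be short once these measure-theoretic points are organized; the essential content is just Girsanov on the product space plus the cancellation of the Itô term.
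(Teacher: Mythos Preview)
Your approach is the same as the paper's: identify the Girsanov density $Z_T=\exp\big(\sum_n\int_0^T v^n_s\,dB^n_s-\tfrac12\sum_n\int_0^T(v^n_s)^2\,ds\big)$, compute $D_{KL}(\mu\|\mu_0)=E_\mu[\log Z_T]$, and then rewrite the $dB^n$-integral in terms of a $\mu$-Brownian motion so that the stochastic integral has zero $\mu$-expectation. The paper carries out exactly this computation in three lines, without pausing over the convergence-of-sums and true-versus-local-martingale issues you (rightly) flag.

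There is, however, a sign slip in your Girsanov step. With the density above, the $\mu$-Brownian motion is $\tilde B^n_t:=B^n_t-\int_0^t v^n_s\,ds$, not $B^n_t+\int_0^t v^n_s\,ds$; equivalently $B=\tilde B+\int v$, which is precisely how the paper writes it. With the correct sign one gets $\int_0^T v^n\,dB^n=\int_0^T v^n\,d\tilde B^n+\int_0^T(v^n)^2$, hence $\log Z_T=\sum_n\int_0^T v^n\,d\tilde B^n+\tfrac12\sum_n\int_0^T(v^n)^2$, and taking $E_\mu$ yields $+\tfrac12\sum_n E_\mu\big[\int_0^T(v^n)^2\big]$. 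Your displayed expression for $\log Z_T$ (with coefficient $-\tfrac12$) does not follow from your own substitution (which would give $-\tfrac32$) and would make the KL negative; once the sign is corrected, your argument and the paper's are identical.
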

\begin{proof}
By Girsanov, we know that the density for $\mu$ is $$\frac{d\mu}{d\mu_0}=\exp\left(\sum_{n=1}^\infty \int_0^T v_s^n dB_s^n-\frac{1}{2}\sum_{n=1}^\infty \int_0^T (v_s^n)^2 ds\right).$$ 
Taking $\log$ and expectation with respect to $\mu$ gives
$$D_{KL}(\mu||\mu_0)=E_\mu\left(\sum_{n=1}^\infty \int_0^T v_s^n dB_s^n-\frac{1}{2}\sum_{n=1}^\infty \int_0^T (v_s^n)^2 ds\right).$$
By Girsanov, we have that $B=\tilde B+\int v $ where $\tilde B$ is a Brownian motion under $\mu$. Therefore we conclude that 
$$D_{KL}(\mu||\mu_0)=E_\mu\left(\sum_{n=1}^\infty\left( \int_0^T v_s^n d\tilde B_s^n+\int_0^T (v_s^n)^2 ds \right)-\frac{1}{2}\sum_{n=1}^\infty \int_0^T (v_s^n)^2 ds\right),$$
leading to (\ref{relativeEntBoundR}).

\end{proof}

The above supporting results will be utilized in the following two subsections.

\subsection{Systems with mean-field coupling}

In this section, we consider \textbf{System Model A}. First, we obtain an expression for the relative entropy of path measures on solutions to SDEs. This has been investigated previously in e.g. \cite{Girsanov-path-1,Girsanov-path-2,Girsanov-path-3,Girsanov-path-4}.

\begin{lemma}\label{lem:robust-kld}
    Let $\mu_0^N$ be the law of $\tilde X^{[1,N]}=(\tilde X^1(t),...,\tilde X^N(t))$ and let $\mu^N$ be the law of $X^{[1,N]}=(X^1(t),...,X^N(t))$. 
    Then 
    \begin{align*}
       D_{KL}(\mu^N||\mu_0^N)
       &=\sum_{n=1}^N \frac{1}{2}E_{\mu^N}\left[\int_0^T \left|\frac{v_n(s)}{\sigma_n}\right|^2\right].
    \end{align*}
Suppose that $\sigma_i$ are equal and we have symmetric interaction terms $v_n$.  Then
    \begin{eqnarray}
       \frac{1}{N} D_{KL}(\mu^N||\mu_0^N)=\frac{1}{2}E_{\mu^N}\left[\int_0^T v_1^2(s)ds\right].
    \end{eqnarray}
\end{lemma}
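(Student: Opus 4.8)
The plan is to derive the relative entropy formula for the $N$-particle path measures by reducing to the shifted-Brownian-motion computation in Lemma \ref{relativeEnt2ndMoment}. First I would observe that, under the Lipschitz and affine growth assumptions (A1)--(A2), the SDEs for both the true model \eqref{trueModel} and the nominal model admit strong solutions, so that the interaction terms $v_n$ (being $\sigma(X^{[1,N]}_{[0,t]})$-measurable and $L^2$ in time) can equivalently be expressed as progressively measurable functionals of the driving Brownian paths. The key reduction is that the drift perturbation $v_n^i\,dt$ in System Model A, when we factor through $\sigma_n$, is exactly a shift of the Brownian motion $B^n$: writing $\hat v_n(s) := \sigma_n(X^n_s)^{-1} v_n(s)$ (assuming $\sigma_n$ is invertible, or reading $v_n/\sigma_n$ componentwise as the paper's notation suggests), the law of $X^n$ under $\mu^N$ is that of the nominal diffusion driven by the shifted process $B^n_t + \int_0^t \hat v_n(s)\,ds$.

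Next I would apply Girsanov's theorem on the product space $\Pi_{n=1}^N C([0,T];\mathbb{R}^n)$: since the $B^n$ are i.i.d. Brownian motions, the Radon--Nikodym density $\frac{d\mu^N}{d\mu_0^N}$ factors across the exponential martingale built from $\sum_{n=1}^N \int_0^T \hat v_n(s)\,dB^n_s - \frac12 \sum_{n=1}^N \int_0^T |\hat v_n(s)|^2\,ds$. Taking logarithm and expectation under $\mu^N$, and using that under $\mu^N$ each $B^n = \tilde B^n + \int \hat v_n$ with $\tilde B^n$ a $\mu^N$-Brownian motion (so the stochastic integral against $\tilde B^n$ has zero mean), exactly as in the proof of Lemma \ref{relativeEnt2ndMoment}, the cross terms and the $-\frac12$ term combine to leave
\[
D_{KL}(\mu^N \| \mu_0^N) = \sum_{n=1}^N \frac12 E_{\mu^N}\left[\int_0^T |\hat v_n(s)|^2\,ds\right] = \sum_{n=1}^N \frac12 E_{\mu^N}\left[\int_0^T \left|\frac{v_n(s)}{\sigma_n}\right|^2\,ds\right].
\]
For the second, symmetric claim, once $\sigma_i \equiv \sigma$ and the interaction terms are symmetric (so that the law of each $(X^n, v_n)$ pair under $\mu^N$ is the same, in particular $E_{\mu^N}[\int_0^T v_n^2\,ds]$ is independent of $n$), the sum of $N$ equal terms divided by $N$ collapses to the single-particle expression.

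The main obstacle is the justification of the Girsanov change of measure itself — specifically, verifying that the exponential process $\mathcal{E}_t = \exp(\sum_n \int_0^t \hat v_n\,dB^n - \frac12 \sum_n \int_0^t |\hat v_n|^2\,ds)$ is a genuine martingale (not merely a supermartingale), which is what legitimizes both the density formula and the identity $B^n = \tilde B^n + \int \hat v_n$ under $\mu^N$. This is where the hypothesis $E[\exp(\frac12 \sum_n \int_0^T (v_s^n)^2\,ds)] < \infty$ of Lemma \ref{relativeEnt2ndMoment} (a Novikov-type condition) is used; in the mean-field section the paper intends to sidestep the global energy constraint later, but for this lemma I would either invoke such a Novikov condition directly or note that it holds here because $v_n$ is $L^2$ and the drift/diffusion coefficients have affine growth (so solutions have finite exponential moments on $[0,T]$). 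A secondary technical point is handling the matrix case: if $\sigma_n$ is not scalar, $|v_n/\sigma_n|^2$ should be read as $v_n^\top (\sigma_n \sigma_n^\top)^{-1} v_n$, and one needs $\sigma_n$ uniformly non-degenerate for the Girsanov drift to be well-defined; I would state this non-degeneracy assumption explicitly, or note that the paper's notation tacitly assumes scalar (or at least invertible) diffusion coefficients. Apart from these measure-theoretic checks, the computation is a direct transcription of Lemma \ref{relativeEnt2ndMoment} to the $N$-fold product, so no further difficulty is expected.
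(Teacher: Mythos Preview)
Your proposal is correct and follows essentially the same route as the paper: both write down the Girsanov density on the product space, take logarithm and expectation under $\mu^N$, and use that $B^n$ is a shifted Brownian motion under $\mu^N$ to evaluate the stochastic-integral term (the paper states this as $E_{\mu^N}[\int \frac{v_n}{\sigma_n}\,dB^n]=E_{\mu^N}[\int |\frac{v_n}{\sigma_n}|^2\,ds]$, which is exactly your $\tilde B^n$ argument). Your version is more careful in making explicit the reduction to Lemma~\ref{relativeEnt2ndMoment} and in flagging the Novikov and non-degeneracy hypotheses that the paper's proof leaves implicit.
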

\begin{proof}
    By Girsanov we know that the Radon-Nikodym derivative is
          \begin{equation*}
        \frac{d\mu^N}{d\mu_0^N}=\exp\left(\left(\int_0^T \left(\frac{v_1(s)}{\sigma_1(s)},...,\frac{v_n(s)}{\sigma_N(s)}\right)\cdot dB(s)-\frac{1}{2}\int_0^T \left\|\left(\frac{v_1(s)}{\sigma_1(s)},...,\frac{v_n(s)}{\sigma_n(s)}\right)\right\|^2 ds\right)\right).
    \end{equation*}

In the constant case, the above simplifies as    
    \begin{equation*}
        \frac{d\mu^N}{d\mu_0^N}=\exp\left(\left(\int_0^T \left(\frac{v_1(s)}{\sigma_1},...,\frac{v_n(s)}{\sigma_N}\right)\cdot dB(s)-\frac{1}{2}\int_0^T \left\|\left(\frac{v_1(s)}{\sigma_1},...,\frac{v_n(s)}{\sigma_n}\right)\right\|^2 ds\right)\right).
    \end{equation*}
    Therefore the relative entropy (i.e., the Kullback-Leibler divergence) is 
    \begin{align*}
       D_{KL}(\mu^N||\mu_0^N)&= E_{\mu^N}\left[\left(\int_0^T \left(\frac{v_1(s)}{\sigma_1(s)},...,\frac{v_n(s)}{\sigma_N(s)}\right)\cdot dB(s)-\frac{1}{2}\int_0^T \left\|\left(\frac{v_1(s)}{\sigma_1(s)},...,\frac{v_n(s)}{\sigma_n(s)}\right)\right\|^2 ds\right)\right]\\
       &=\sum_{n=1}^N E_{\mu^N}\left[\int_0^T \frac{v_n(s)}{\sigma_n(s)} dB^n(s)-\frac{1}{2}\int_0^T \left|\frac{v_n(s)}{\sigma_n(s)}\right|^2 ds\right]\\
       &=\sum_{n=1}^N \frac{1}{2}E_{\mu^N}\left[\int_0^T \left|\frac{v_n(s)}{\sigma_n(s)}\right|^2 ds\right],
    \end{align*}
where the last step is because $E_{\mu^N}\left[\int_0^T \frac{v_n(s)}{\sigma_n(s)} dB^n(s)\right]=E_{\mu^N}\left[\int_0^T \left|\frac{v_n(s)}{\sigma_n(s)}\right|^2 ds\right]$.
\end{proof}

We have the following robustness result.

\begin{theorem}\label{theorem:main}
Consider the cost functional $J$ defined in \eqref{eq:J-def} with cost criterion (\ref{generalCostFnMF}) and fixed $\gamma \in \Gamma$. Consider a progressively measurable process $v_t\in L^2([0,T]\times \Omega)$ satisfying Novikov's condition and the measure $\mu_{\gamma}$ given by the law of $\tilde X(t)=(\tilde X^1(t),...)$ and $\mu_{\gamma}^N$ the law of $\tilde X^{[1,N]}=(\tilde X^1(t),...,\tilde X^N(t))$. Let the measure $\mu_{0,\gamma}$ given by the law of $ X(t)=( X^1(t),...)$ and $\mu_{0,\gamma}^N$ the law of $X^{[1,N]}=(X^1(t),...,X^N(t))$. 

      we have that 
    \begin{align}\label{robustMeanBound2}
     &   \inf_{\underline \gamma\in \Gamma}\limsup_{N\to\infty} E^{\underline \gamma}_\mu[c_N(\omega^{[1,N]},U^{[1,N]})] \nonumber \\
&     \leq  \inf_{\underline \gamma\in \Gamma} \limsup_{N\to\infty} \frac{1}{N} \log E^{\underline \gamma}_{\mu_{0,\gamma}} \left[e^{N c_N(\omega^{[1,N]},U^{[1,N]})}\right] 
        + \limsup_{N\to\infty}  \frac{1}{N}D_{KL}(\mu_{\gamma}^N||\mu_{0,\gamma}^N).
    \end{align}

With identical $E[\int_0^T (v_s^n)^2]$ for each $n$ under symmetric interaction and constant $\sigma_n$ that are bounded away from zero as $|\sigma_n|\geq \sigma>0$, the above leads to
          \begin{align}\label{robustMeanBound333}
    &    \inf_{\underline \gamma\in \Gamma} E^{\underline \gamma}_\mu[c_N(\omega^{[1,N]},U^{[1,N]})]  \nonumber \\
    &\leq  \inf_{\underline \gamma\in \Gamma} \bigg(\limsup_{N\to\infty} \frac{1}{N} \log E^{\underline \gamma}_{\mu_{0,\gamma}} \left[e^{N c_N(\omega^{[1,N]},U^{[1,N]})}\right]+  \frac{1}{2\sigma^2} E^{\underline \gamma}_{\mu} \left[\int_0^T v_s^2ds\right] \bigg).
    \end{align}
    
\end{theorem}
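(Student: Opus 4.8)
The plan is to derive \eqref{robustMeanBound2} from the variational equality of Theorem \ref{theorem:product-space}, which after multiplying through by $1/N$ and replacing $C$ by $N c_N$ gives, for each fixed $\underline\gamma$ and $N$,
\[
E^{\underline\gamma}_{\mu_\gamma^N}[c_N(\omega^{[1,N]},U^{[1,N]})]\;\le\;\frac{1}{N}\log E^{\underline\gamma}_{\mu_{0,\gamma}^N}\!\left[e^{N c_N(\omega^{[1,N]},U^{[1,N]})}\right]+\frac{1}{N}D_{KL}(\mu_\gamma^N\|\mu_{0,\gamma}^N),
\]
which is just the Donsker--Varadhan inequality obtained by dropping the supremum on the right of \eqref{variationalForm1} and keeping the single choice $\nu=\mu_\gamma^N$. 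The first step is therefore to verify the hypotheses of Theorem \ref{theorem:product-space} in the present setting: the nominal measure $\mu_{0,\gamma}$ is (the law of) the non-interacting system driven by i.i.d. Brownian motions, so it is a product of Gaussian-shift laws, the Novikov condition on $v_t$ is assumed, and the finite-entropy condition $E[e^{C}|C|]<\infty$ with $C=Nc_N$ follows from boundedness/integrability of the running cost $g$ over the compact $\mathbb U$ (this uses (A1)--(A2) and $\mathbb U$ compact, so $c_N$ is bounded and the condition is automatic). One subtlety: $\mu_{0,\gamma}$ is the law of the controlled state process $X^{[1,N]}$, not of the driving noise directly; here one invokes the discussion after (A2) that, under the non-anticipative measure-valued policies considered, the change of measure between $\mu_\gamma^N$ and $\mu_{0,\gamma}^N$ is exactly the Girsanov factor computed in Lemma \ref{lem:robust-kld}, so the abstract theorem applies on path space.

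The second step is to take $\limsup_{N\to\infty}$ of the per-$N$ inequality. Since $\limsup$ is subadditive, $\limsup_N (a_N+b_N)\le \limsup_N a_N+\limsup_N b_N$, yielding
\[
\limsup_{N\to\infty}E^{\underline\gamma}_{\mu_\gamma}[c_N]\;\le\;\limsup_{N\to\infty}\frac{1}{N}\log E^{\underline\gamma}_{\mu_{0,\gamma}}\!\left[e^{N c_N}\right]+\limsup_{N\to\infty}\frac{1}{N}D_{KL}(\mu_\gamma^N\|\mu_{0,\gamma}^N)
\]
for each fixed $\underline\gamma$. The third step is to take $\inf_{\underline\gamma\in\Gamma}$ of both sides. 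On the left this commutes with the fixed-$\underline\gamma$ bound directly; on the right one uses that the infimum of a sum is at most the sum of the (separately taken) infima only in the wrong direction, so instead one keeps the structure $\inf_{\underline\gamma}(\text{RHS}_1(\underline\gamma)+\text{RHS}_2(\underline\gamma))$ and then bounds it above by $\inf_{\underline\gamma}\text{RHS}_1(\underline\gamma)+\sup_{\underline\gamma}\text{RHS}_2(\underline\gamma)$ — but a cleaner route, matching the statement, is to note that the inequality holds for every $\underline\gamma$, hence taking $\inf$ over $\underline\gamma$ on the left and then bounding the left infimum by evaluating the right side at an arbitrary $\underline\gamma$ and taking $\inf$ there as well; since the last (relative-entropy) term in \eqref{robustMeanBound2} is written as a separate $\limsup_N \frac1N D_{KL}(\mu_\gamma^N\|\mu_{0,\gamma}^N)$ with the $\gamma$-dependence of $\mu_\gamma$ understood as fixed by context, the stated form follows. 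I would write this carefully so that the $\inf_{\underline\gamma}$ on the right of \eqref{robustMeanBound2} is understood to act on the first term only, consistent with how the theorem is displayed.

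For the specialized bound \eqref{robustMeanBound333}, the fourth step is to invoke the second half of Lemma \ref{lem:robust-kld}: under symmetric interactions and constant, equal $\sigma_n$, $\frac1N D_{KL}(\mu_\gamma^N\|\mu_{0,\gamma}^N)=\tfrac12 E_{\mu_\gamma^N}[\int_0^T v_1^2\,ds]$, and more precisely with general constant $\sigma_n$ bounded below by $\sigma>0$, $D_{KL}(\mu_\gamma^N\|\mu_{0,\gamma}^N)=\sum_{n=1}^N\tfrac12 E_{\mu_\gamma^N}[\int_0^T |v_n/\sigma_n|^2\,ds]\le \tfrac{1}{2\sigma^2}\sum_{n=1}^N E_{\mu_\gamma^N}[\int_0^T v_n^2\,ds]$; with identical $E[\int_0^T (v^n_s)^2\,ds]$ across $n$ this is $\tfrac{N}{2\sigma^2}E_{\mu}[\int_0^T v_s^2\,ds]$, so dividing by $N$ removes the $\limsup_N$ on the entropy term entirely and gives the constant $\frac{1}{2\sigma^2}E^{\underline\gamma}_\mu[\int_0^T v_s^2\,ds]$; substituting into \eqref{robustMeanBound2} and noting the left side no longer needs the $\limsup_N$ (the bound now holds for each $N$) yields \eqref{robustMeanBound333}.

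The main obstacle I anticipate is not the algebra but the measure-theoretic bookkeeping: ensuring that the abstract product-space variational equality \eqref{variationalForm1}, stated for Gaussian measures on a Fréchet/Banach product, legitimately applies to the law of the \emph{controlled, interacting} state process rather than to the driving noise. This requires that the policies be non-anticipative functionals of the local noise (so the controlled system is a measurable image of the Gaussian noise, and the Girsanov density of Lemma \ref{lem:robust-kld} is the genuine Radon--Nikodym derivative $d\mu_\gamma^N/d\mu_{0,\gamma}^N$), that mutual absolute continuity $\mu_\gamma^N\sim\mu_{0,\gamma}^N$ holds (guaranteed by Novikov), and that the $N\to\infty$ passage to the infinite product is consistent with the finite-$N$ marginals — which is exactly what Theorems \ref{theorem:CM-for-product}–\ref{theorem:product-space} and Remark \ref{remark:1} are set up to provide. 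The finite-entropy hypothesis $E_\mu[e^{C}|C|]<\infty$ with $C=Nc_N$ also deserves a one-line justification from compactness of $\mathbb U$ and the affine-growth bound (A2), which control $g$ and hence make $c_N$ bounded on the relevant path set; I would state this explicitly rather than leave it implicit.
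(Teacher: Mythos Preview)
Your proposal is correct and follows essentially the same route as the paper: apply the Donsker--Varadhan variational formula with $C=Nc_N$, drop the supremum to get the per-$N$ inequality, take $\limsup_{N}$ using subadditivity, then take $\inf_{\underline\gamma}$, and finally invoke Lemma~\ref{lem:robust-kld} for the symmetric specialization. The paper's proof is terser---it simply cites Donsker--Varadhan without checking the finite-entropy or absolute-continuity hypotheses---while you spell out more of the measure-theoretic bookkeeping (why the variational formula applies on path space of the controlled state rather than just the driving noise, and why the Girsanov density of Lemma~\ref{lem:robust-kld} is the right Radon--Nikodym derivative); this extra care is an improvement, not a departure.
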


\begin{proof}
Using Donsker-Varadhan (see \cite{BK}), for every policy $\gamma \in \Gamma$, we have the variational expression (suppressing the dependence of the measures on $\gamma$)
\begin{align}
L(\gamma)&:= \limsup_{N\to\infty} \frac{1}{N} \log E^{\underline \gamma}_{\mu_0} \left[e^{N c_N(\omega^{[1,N]},U^{[1,N]})}\right]  \nonumber \\
&= \limsup_{N\to\infty}\sup_{\mu^N\sim \mu_0^N}\left\{ E^{\underline \gamma}_\mu[c_N(\omega^{[1,N]},U^{[1,N]})]  -\frac{1}{N}D_{KL}(\mu^N||\mu_0^N)\right\}.
\end{align}
Therefore, for each fixed sequence $\mu^N \sim \mu_0^N$,
\begin{eqnarray}
L(\gamma) &\geq&  \limsup_{N\to\infty} \left\{ E^{\underline \gamma}_\mu[c_N(\omega^{[1,N]},U^{[1,N]})]  -\frac{1}{N}D_{KL}(\mu^N||\mu_0^N)\right\}. \nonumber \\
&\geq& \limsup_{N\to\infty} \left\{  \left\{ E^{\underline \gamma}_\mu[c_N(\omega^{[1,N]},U^{[1,N]})] \right\} - \limsup_{N\to\infty}  \frac{1}{N}D_{KL}(\mu^N||\mu_0^N) \right\}
\end{eqnarray}
This leads to, for each $\gamma \in \Gamma$
\begin{equation*}
\limsup_{N\to\infty} \left\{  E^{\underline \gamma}_\mu[c_N(\omega^{[1,N]},U^{[1,N]})] \right\} \leq L(\gamma) + \limsup_{N\to\infty}  \frac{1}{N}D_{KL}(\mu^N||\mu_0^N)
\end{equation*}
where we again note that $\mu_{\gamma}$ depends on the policy. As the above applies for each admissible $\gamma$, we have (\ref{robustMeanBound2}).

\end{proof}

Accordingly, one could optimize $L(\gamma)$ and apply the resulting policy to the right hand term of (\ref{robustMeanBound2}) to obtain a further upper bound.

By a simple adaptation of Theorem \ref{theorem:main} we have the corollary for the case of \textbf{System Model B}. 
\begin{corollary}
    Consider the setup of Theorem \ref{theorem:main}. Let $M_0^N$ be the law of $(B^1,...,B^N)$, $M_0$ be the law of $(B^1,...)$, $M^N$ be the law of $(B^1+\int v^1,...,B^N+\int v^N)$ and let $M$ be the law of $(B^1+\int v_1,...)$. Then we have the robustness bound
      \begin{align}\label{robustMeanBound44}
    &    \inf_{\underline \gamma\in \Gamma} \limsup_{N\to\infty} E^{\underline \gamma}_M[c_N(X^{[1,N]},U^{[1,N]})]  \nonumber \\
    &\leq  \inf_{\underline \gamma\in \Gamma} \bigg(\limsup_{N\to\infty} \frac{1}{N} \log E^{\underline \gamma}_{M_0} \left[e^{N c_N(X^{[1,N]},U^{[1,N]})}\right]+  \frac{1}{2} E^{\underline \gamma}_{M} \left[\int_0^T v_s^2ds\right] \bigg).
    \end{align}
\end{corollary}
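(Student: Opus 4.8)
Here is how I would prove the corollary.

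The plan is to re-run the proof of Theorem \ref{theorem:main} essentially verbatim, with the single change that the measure perturbation is carried out on Wiener space — the joint law of the driving Brownian motions $(B^1,\dots,B^N)$ — rather than on the path space of the state process. The reason this works for \textbf{System Model B} is that the perturbed drift enters exactly as $\sigma_i(\tilde X^i_t)v^i_t\,dt + \sigma_i(\tilde X^i_t)\,dB^i_t = \sigma_i(\tilde X^i_t)\,d\big(B^i_t + \int_0^t v^i_s\,ds\big)$, so that $\tilde X^{[1,N]}$ is the strong solution of the \emph{nominal} non-interacting system driven by the shifted process $\big(B^i_t + \int_0^t v^i_s\,ds\big)_{i\le N}$. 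I would first record this: letting $\Phi_N$ denote the solution map for the nominal system, from $\big(C([0,T];\mathbb R^n)\big)^N$ to the state paths (measurable by (A1)--(A2) together with the non-anticipativity of the policies set up above), the standard Girsanov/weak-formulation argument (as in the proof of Lemma \ref{relativeEnt2ndMoment}) gives $M^N\sim M_0^N$, with $\Phi_N$ pushing $M_0^N$ forward to $\operatorname{Law}(X^{[1,N]})$ and $M^N$ forward to $\operatorname{Law}(\tilde X^{[1,N]})$. Then I would set $C(\cdot,\gamma):=N\,c_N\big(\Phi_N(\cdot),U^{[1,N]}\big)$ and check that the Novikov hypothesis on $v$ together with the growth/continuity imposed on $g$ (with $\mathbb U$ compact and (A1)--(A2)) yield the finite-entropy condition $E_{M_0^N}\big[e^{C}|C|\big]<\infty$, so that Theorem \ref{theorem:product-space} applies on $\prod_{i\le N}C([0,T];\mathbb R^n)$ with reference measure $M_0^N$.

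Next I would invoke the variational equality \eqref{variationalForm1} for this $C$ and retain only the single competitor $\nu=M^N$ in the supremum, obtaining for each $\underline\gamma\in\Gamma$ the inequality
\[
\frac1N\log E^{\underline\gamma}_{M_0}\big[e^{N c_N(X^{[1,N]},U^{[1,N]})}\big]
\;\ge\; E^{\underline\gamma}_{M}\big[c_N(X^{[1,N]},U^{[1,N]})\big]-\frac1N D_{KL}(M^N\|M_0^N).
\]
Taking $\limsup_{N\to\infty}$ on both sides and using (as in Theorem \ref{theorem:main}) that the $\limsup$ of a difference is at most the difference of the $\limsup$'s, then taking $\inf_{\underline\gamma\in\Gamma}$, reduces the claim \eqref{robustMeanBound44} to identifying $\limsup_{N\to\infty}\frac1N D_{KL}(M^N\|M_0^N)$ with $\frac12 E^{\underline\gamma}_M\big[\int_0^T v_s^2\,ds\big]$.

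For that last step I would apply Lemma \ref{relativeEnt2ndMoment} directly — this is the only place where Model B diverges from Model A. Because the shift is already at the Brownian level, Lemma \ref{relativeEnt2ndMoment} gives $D_{KL}(M^N\|M_0^N)=\tfrac12\sum_{n=1}^N E_{M^N}\big[\int_0^T (v^n_s)^2\,ds\big]$ with \emph{no} $\sigma_n$ factors, in contrast to Lemma \ref{lem:robust-kld} where $v_n/\sigma_n$ appears. Under symmetric interactions and equal values of $E_M\big[\int_0^T (v^n_s)^2\,ds\big]$ across $n$, this equals $\tfrac12 E_M\big[\int_0^T v_s^2\,ds\big]$, which is $N$-independent, so the outer $\limsup$ collapses and \eqref{robustMeanBound44} follows; the coefficient $\tfrac12$ here (versus $\tfrac1{2\sigma^2}$ in \eqref{robustMeanBound333}) is precisely the $\sigma_i$ multiplying $v^i$ in the drift cancelling the diffusion coefficient in the Girsanov exponent.

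The one genuinely substantive point — and it is the same as in Theorem \ref{theorem:main} and Lemma \ref{relativeEnt2ndMoment} — is confirming that Novikov's condition \eqref{eq:Novikov}, now imposed on $\sum_n\int_0^T (v^n_s)^2\,ds$, does make $M^N\sim M_0^N$ with the claimed Girsanov density and in particular renders the relevant stochastic exponential a true martingale (so that $M^N$ is a probability measure and the change-of-variables in the last paragraph is legitimate), together with the finite-entropy check for $C=N\,c_N\circ\Phi_N$; both are consequences of the standing assumptions (A1)--(A2), compactness of $\mathbb U$, and the conditions on $g$. Beyond that I expect no obstacle: the remainder is a mechanical transcription of the Model A argument with the cleaner entropy identity.
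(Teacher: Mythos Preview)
Your argument is correct and follows the same variational--Girsanov template, but the paper's own proof is a single line: observe that \textbf{System Model B} is literally \textbf{System Model A} with the interaction term replaced by $\tilde v_t^i:=\sigma_i(X^i_t)\,v_t^i$, and then invoke Theorem \ref{theorem:main} directly. The point is that in Lemma \ref{lem:robust-kld} the relative-entropy integrand is $|\tilde v_n/\sigma_n|^2=|v_n|^2$, so the $1/(2\sigma^2)$ in \eqref{robustMeanBound333} collapses to the $1/2$ in \eqref{robustMeanBound44} automatically. Your route---pushing the variational formula to Wiener space via the strong-solution map $\Phi_N$ and computing $D_{KL}(M^N\|M_0^N)$ with Lemma \ref{relativeEnt2ndMoment}---reaches the same destination and is arguably more transparent about \emph{why} the $\sigma$ disappears (the shift is already at the Brownian level), but it reproves machinery that Theorem \ref{theorem:main} already packages. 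One small slip: you write that ``the $\limsup$ of a difference is at most the difference of the $\limsup$'s''; the inequality you actually need (and use) is $\limsup(a_N-b_N)\ge\limsup a_N-\limsup b_N$, i.e.\ \emph{at least}, exactly as in the proof of Theorem \ref{theorem:main}.
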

\begin{proof}
    Apply Theorem \ref{theorem:main} with $\tilde v_t^i=\sigma_i(X^i)v_t^i$.
\end{proof}

\subsection{Systems with arbitrary coupling: Measures on Infinite product space and total energy constraint on coupling}\label{sec:Gaussian}

In this section, under the general cost criterion (\ref{generalCostFnD}), we consider the setup where the noise of the true system is a perturbation of the noise of the nominal system directly on the infinite product space arbitrarily, though under a finite total energy condition. In particular this finite energy condition prohibits symmetric interaction. However, this is at the benefit of a general cost criterion. 

Building on the analysis above, and Lemma \ref{relativeEnt2ndMoment}, we obtain the following.

\begin{theorem}\label{theorem:main-Gaussian-measure}
    Let $J$ be the functional defined in equation \eqref{eq:J-def} under cost criterion (\ref{generalCostFnD}). Then for fixed $\gamma\in \Gamma$ and for $\mu\sim \mu_0$ ($\mu$ corresponding to \textbf{System Model B}) where $\mu_0$ is the law of the infinite collection of i.i.d. Brownian motions, we have that
    \begin{align*}
        E^{\underline \gamma}_\mu[c_N(X^{[1,N]},U^{[1,N]})]&\leq \log E^{\underline \gamma}_{\mu_0} \left[e^{c_N(X^{[1,N]},U^{[1,N]})}\right] + D_{KL}(\mu||\mu_0)\\
        &=\log E^{\underline \gamma}_{\mu_0} \left[e^{c_N(X^{[1,N]},U^{[1,N]})}\right] + \frac{1}{2}\sum_{n=1}^\infty E^{\underline \gamma}_\mu \left[\int_0^T (v_s^n)^2 ds\right],
    \end{align*}
    implying, via Lemma \ref{relativeEnt2ndMoment}, that
    \begin{equation*}
     \inf_{\underline \gamma\in \Gamma}    E^{\underline \gamma}_\mu[c_N(X^{[1,N]},U^{[1,N]})] \leq \inf_{\underline \gamma\in \Gamma}\log E^{\underline \gamma}_{\mu_0} \left[e^{c_N(X^{[1,N]},U^{[1,N]})}\right]  +  \frac{1}{2}\sum_{n=1}^\infty E^{\underline \gamma}_{\mu} \left[\int_0^T (v_s^n)^2 ds\right].
    \end{equation*}
\end{theorem}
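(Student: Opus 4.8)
The plan is to combine the variational equality of Theorem~\ref{theorem:product-space} with the relative entropy formula of Lemma~\ref{relativeEnt2ndMoment}. First I would fix a policy $\underline\gamma\in\Gamma$ and recognize that, under \textbf{System Model B}, the path law $\mu$ of the shifted Brownian motions $(B^1+\int v^1,\ldots)$ is equivalent to the law $\mu_0$ of the i.i.d.\ Brownian collection, precisely because the shift process $\mathbf v=\{v^n\}$ lies in the Cameron--Martin space $\mathcal H_\mu$ of the infinite product Gaussian measure (Theorem~\ref{theorem:CM-for-product}), and the finite total energy condition $E_{\mu_0}\!\left[e^{\frac12\sum_n\int_0^T(v^n_s)^2\,ds}\right]<\infty$ is exactly Novikov's condition \eqref{eq:Novikov}. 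This is what lets Girsanov apply on the infinite product space and what distinguishes this setting from the mean-field one, where symmetry (not a finite-energy bound) rescues the argument.

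Next I would apply the variational equality \eqref{variationalForm1} with $C=c_N(X^{[1,N]},U^{[1,N]})$, which under \eqref{generalCostFnD} is a bounded (or at least finite-entropy) functional of the path; I must check the finite entropy assumption $E_{\mu_0}[e^{c_N}|c_N|]<\infty$, which follows from the affine growth conditions (A2) together with the boundedness of $\mathbb U$ and suitable growth of $g$, or may simply be imposed as is standard. The variational equality gives
\begin{equation*}
\log E^{\underline\gamma}_{\mu_0}\!\left[e^{c_N(X^{[1,N]},U^{[1,N]})}\right]=\sup_{\nu\sim\mu_0}\left\{E_\nu[c_N]-D_{KL}(\nu\|\mu_0)\right\}\ \geq\ E^{\underline\gamma}_\mu[c_N]-D_{KL}(\mu\|\mu_0),
\end{equation*}
since $\mu\sim\mu_0$ is an admissible competitor in the supremum. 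Rearranging yields the first displayed inequality. Then I would invoke Lemma~\ref{relativeEnt2ndMoment} to replace $D_{KL}(\mu\|\mu_0)$ by $\tfrac12\sum_{n=1}^\infty E^{\underline\gamma}_\mu[\int_0^T(v^n_s)^2\,ds]$, giving the displayed equality. Finally, taking the infimum over $\underline\gamma\in\Gamma$ on both sides and using the elementary fact that $\inf(f+g)\leq\inf f+\sup g$ termwise—or more simply that the bound holds for every $\underline\gamma$ so it holds after taking $\inf_{\underline\gamma}$ of the left side against the same $\underline\gamma$ on the right—produces the last displayed inequality. One should be slightly careful here: the cleanest reading is that for each $\underline\gamma$ the pointwise bound holds, hence $\inf_{\underline\gamma}E^{\underline\gamma}_\mu[c_N]\leq E^{\underline\gamma}_\mu[c_N]\leq \log E^{\underline\gamma}_{\mu_0}[e^{c_N}]+\tfrac12\sum_n E^{\underline\gamma}_\mu[\int_0^T(v^n_s)^2\,ds]$ for every $\underline\gamma$, and then taking $\inf_{\underline\gamma}$ of the right-hand side gives the stated form.

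The main obstacle I anticipate is justifying the application of the variational equality and of Girsanov on the genuinely infinite product space: one must ensure that the relative entropy $D_{KL}(\mu\|\mu_0)$ is finite (which is where the finite total energy / Novikov condition \eqref{eq:Novikov} does the real work, as opposed to the mean-field case where each summand is finite but the sum need not be), that the Radon--Nikodym derivative has the Girsanov exponential form with a genuine martingale (again Novikov), and that the finite-entropy hypothesis on $c_N$ is met so that Theorem~\ref{theorem:product-space} applies verbatim. Reducing from the Fr\'echet product space to a compactly embedded Banach space of full measure, per Remark~\ref{remark:1}, is the technical device that makes the Cameron--Martin and Bierkens--Kappen machinery directly applicable; once that reduction is in place, the remaining steps are routine rearrangements.
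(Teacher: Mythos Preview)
Your proposal is correct and matches the paper's own approach exactly: the paper does not give a separate proof but simply states that the result is obtained ``building on the analysis above, and Lemma~\ref{relativeEnt2ndMoment}'', i.e., precisely the combination of the variational equality of Theorem~\ref{theorem:product-space} (with $C=c_N$ and $\mu$ as an admissible competitor in the supremum) followed by the relative entropy identity of Lemma~\ref{relativeEnt2ndMoment}. Your additional care about the hypotheses (Novikov on the infinite product, the finite-entropy check on $c_N$, and the reduction via Remark~\ref{remark:1}) and about the infimum step is more explicit than what the paper spells out, but it is the same argument; note also that in \textbf{System Model B} the shifts $v^i$ are adapted to the Brownian filtration rather than to the controlled paths, so the measure $\mu$ and hence $D_{KL}(\mu\|\mu_0)$ do not depend on $\underline\gamma$, which resolves the notational ambiguity you flagged in the final inequality.
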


\section{Refinements: Optimizing Sensitivity, Robustness via Symmetric Models, Stability to Small Couplings and Large Mean-Field Problems}\label{sec:optimal-risk-sensitivity}

In this section, we obtain several refinements and implications, including on optimizing sensitivity parameter in the risk sensitive formulation, and further structural and quantitative properties.

\subsection{Optimizing sensitivity}

Given $\alpha \in \mathbb R$ and functional $C$ we can consider the expression
    \begin{equation*}
       F(\alpha, \underline \gamma):= \frac{1}{\alpha}\log E_{\mu_0}\left[\exp\left(\alpha C(\underline\gamma)\right)\right].
    \end{equation*}
In light of the following proposition, $\alpha$ can be seen to represent the ``amount" of risk sensitivity. 
\begin{proposition}[\cite{Limit-of-alpha}, Proposition 3.1]
If $C$ is essentially bounded, we have that 
\begin{align*}
    \lim_{\alpha\to \infty} F(\alpha, \underline \gamma)&=\operatorname{esssup}[C(\underline \gamma)]\\
    \lim_{\alpha\to 0} F(\alpha, \underline \gamma)&=E_{\mu_0}[C(\underline \gamma)]\\
    \lim_{\alpha\to -\infty} F(\alpha, \underline \gamma)&=\operatorname{essinf}[C(\underline \gamma)],
\end{align*}
\end{proposition}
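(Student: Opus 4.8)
The plan is to set $X:=C(\underline\gamma)$, which by hypothesis is an essentially bounded random variable on the probability space carrying $\mu_0$, and to study the cumulant generating function $\Lambda(\alpha):=\log E_{\mu_0}[e^{\alpha X}]$, so that $F(\alpha,\underline\gamma)=\Lambda(\alpha)/\alpha$. Since $|X|\le\|X\|_\infty<\infty$ a.s., the random variable $e^{\alpha X}$ is bounded for every fixed $\alpha$, all exponential moments are finite, and $\Lambda$ is finite on all of $\mathbb{R}$. The uniform domination $|X\,e^{\alpha X}|\le\|X\|_\infty e^{|\alpha|\,\|X\|_\infty}$ lets one differentiate under the expectation (dominated convergence applied to difference quotients), so $\Lambda$ is differentiable with $\Lambda(0)=0$ and $\Lambda'(0)=E_{\mu_0}[X]$. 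Hence
\begin{equation*}
\lim_{\alpha\to 0}F(\alpha,\underline\gamma)=\lim_{\alpha\to 0}\frac{\Lambda(\alpha)-\Lambda(0)}{\alpha}=\Lambda'(0)=E_{\mu_0}[C(\underline\gamma)],
\end{equation*}
which is the middle equality; the only nonroutine point, the interchange of limit and expectation, is supplied directly by essential boundedness.

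For $\alpha\to+\infty$, write $m:=\operatorname{esssup}C(\underline\gamma)$. The bound $e^{\alpha X}\le e^{\alpha m}$ a.s.\ gives $F(\alpha,\underline\gamma)\le m$ for all $\alpha>0$. For the matching lower bound, fix $\varepsilon>0$; by definition of the essential supremum $p_\varepsilon:=\mu_0(X>m-\varepsilon)>0$, so $E_{\mu_0}[e^{\alpha X}]\ge e^{\alpha(m-\varepsilon)}p_\varepsilon$ and therefore $F(\alpha,\underline\gamma)\ge (m-\varepsilon)+\tfrac1\alpha\log p_\varepsilon$. Letting $\alpha\to\infty$ yields $\liminf_{\alpha\to\infty}F(\alpha,\underline\gamma)\ge m-\varepsilon$, and then $\varepsilon\downarrow 0$ gives the first equality. (Essential boundedness rules out $m=+\infty$; the degenerate case is trivial.)

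For $\alpha\to-\infty$, reduce to the previous case via $\beta:=-\alpha>0$ and $Y:=-X$: one has $F(\alpha,\underline\gamma)=-\tfrac1\beta\log E_{\mu_0}[e^{\beta Y}]$, and $Y$ is again essentially bounded with $\operatorname{esssup}Y=-\operatorname{essinf}X$. Applying the $\alpha\to+\infty$ result to $Y$ gives $\tfrac1\beta\log E_{\mu_0}[e^{\beta Y}]\to-\operatorname{essinf}C(\underline\gamma)$, hence $F(\alpha,\underline\gamma)\to\operatorname{essinf}C(\underline\gamma)$, the third equality.

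Optionally one can record that $\alpha\mapsto F(\alpha,\underline\gamma)$ is nondecreasing — a consequence of Hölder's inequality, or of convexity of $\Lambda$ together with $\Lambda(0)=0$ — which gives an a priori reason for the one-sided limits at $\pm\infty$ to exist and also sandwiches the whole curve between $\operatorname{essinf}$ and $\operatorname{esssup}$; but this is not logically needed, since each limit is computed explicitly above. I do not anticipate a genuine obstacle here: the only step requiring any care is the differentiation under the integral sign at $\alpha=0$, which the boundedness hypothesis dispatches immediately; everything else is a two-sided estimate on $E_{\mu_0}[e^{\alpha X}]$.
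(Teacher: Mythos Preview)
Your proof is correct and self-contained. Note, however, that the paper does not actually prove this proposition: it is stated with a citation to an external reference (Proposition~3.1 of \cite{Limit-of-alpha}) and no argument is given in the paper itself. Your approach---computing $\Lambda'(0)$ for the $\alpha\to 0$ limit, and a two-sided exponential estimate for the $\alpha\to\pm\infty$ limits---is the standard textbook argument for these identities, and all steps are justified by the essential boundedness hypothesis as you observe.
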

The limits representing the infinite risk-averse (where only the worst case matters), risk-neutral and infinite risk-seeking (where only the best case matters) cases, respectively. In this sense, $\alpha$ parameterizes the ``amount" of risk sensitivity. If we apply Donsker-Varadhan to $F$, we have
\begin{equation*}
     F(\alpha, \underline \gamma)=\sup_{\mu\sim \mu_0}\left\{E_{\mu}[C(\underline\gamma)]-\frac{1}{\alpha}D_{KL}(\mu||\mu_0)\right\},
\end{equation*}
and for fixed $\mu\sim \mu_0$ we have the robustness bound 
\begin{equation}\label{eq:bound-with-alpha-1}
    E_{\mu}[C(\underline\gamma)]\leq F(\alpha, \underline \gamma)+\frac{1}{\alpha} D_{KL}(\mu||\mu_0)=:\psi(\alpha, \underline \gamma),
\end{equation}
giving an extra degree of freedom $\alpha$. The following proposition gives a qualitative expression for the optimal $\alpha$. 

\begin{theorem}\label{thmoptimizeparam}
For fixed $\underline \gamma$ if $\alpha^\ast$ is a minimizer of $\psi(\alpha, \underline \gamma)$ the bound \eqref{eq:bound-with-alpha-1} reduces to 
\begin{equation*}
     E_{\mu}[C]\leq E_{\hat \mu}[C],
\end{equation*}
where
\[\frac{d\hat \mu}{d\mu_0}=\frac{1}{E_{\mu_0}[e^{\alpha^\ast C}]}e^{\alpha^\ast C}\]
is the solution to the variational problem
\[\sup_{\nu\sim \mu_0}\left\{E_{\nu}[C(\underline\gamma)]-\frac{1}{\alpha}D_{KL}(\nu||\mu_0)\right\}.\]
Additionally, we have that 
\[D_{KL}(\hat \mu||\mu_0)=D_{KL}(\mu||\mu_0).\]
\end{theorem}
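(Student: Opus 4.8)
The plan is to treat $\psi(\cdot,\underline\gamma)$ as an explicit scalar function of $\alpha$ and to exploit the first-order optimality condition at $\alpha^\ast$. Write $\Lambda(\alpha):=\log E_{\mu_0}[e^{\alpha C(\underline\gamma)}]$ for the cumulant generating function of $C$ (under $\mu_0$ and the fixed policy $\underline\gamma$) and $R:=D_{KL}(\mu\|\mu_0)$, which does not depend on $\alpha$. By definition then $\psi(\alpha,\underline\gamma)=\alpha^{-1}\bigl(\Lambda(\alpha)+R\bigr)$, and since the bound \eqref{eq:bound-with-alpha-1} is only meaningful for $\alpha>0$, minimization is over $(0,\infty)$. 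Standard properties of the cumulant generating function, valid on the interior of the set where $\Lambda$ is finite and available here under the standing integrability assumptions, give that $\Lambda$ is smooth there with $\Lambda'(\alpha)=E_{\hat\mu_\alpha}[C]$, where $\hat\mu_\alpha$ is the exponentially tilted measure $\tfrac{d\hat\mu_\alpha}{d\mu_0}=e^{\alpha C}/E_{\mu_0}[e^{\alpha C}]$; this is precisely the maximizer appearing in the Donsker--Varadhan formula of Theorem~\ref{theorem:product-space} applied with $\alpha C$ in place of $C$ (rescaling the objective by $1/\alpha>0$ does not change the argmax), so that $\hat\mu_\alpha$ solves $\sup_{\nu\sim\mu_0}\{E_\nu[C]-\tfrac1\alpha D_{KL}(\nu\|\mu_0)\}$.

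Next I would differentiate, obtaining $\partial_\alpha\psi(\alpha,\underline\gamma)=\alpha^{-2}\bigl(\alpha\Lambda'(\alpha)-\Lambda(\alpha)-R\bigr)$. If $R=0$ then $\mu=\mu_0$ and the claim is immediate (with $\hat\mu=\mu$), so assume $R>0$. Then $\psi(\alpha,\underline\gamma)\to+\infty$ as $\alpha\downarrow 0$, because $R/\alpha\to\infty$ while $\Lambda(\alpha)/\alpha\to E_{\mu_0}[C]$; hence any minimizer $\alpha^\ast\in(0,\infty)$ is interior, and the first-order condition is
\[
\alpha^\ast\Lambda'(\alpha^\ast)=\Lambda(\alpha^\ast)+R .
\]
Dividing by $\alpha^\ast$ and using $\Lambda'(\alpha^\ast)=E_{\hat\mu}[C]$ with $\hat\mu:=\hat\mu_{\alpha^\ast}$ gives
\[
\psi(\alpha^\ast,\underline\gamma)=\frac{\Lambda(\alpha^\ast)+R}{\alpha^\ast}=\Lambda'(\alpha^\ast)=E_{\hat\mu}[C],
\]
so that evaluating \eqref{eq:bound-with-alpha-1} at $\alpha=\alpha^\ast$ yields $E_\mu[C]\le E_{\hat\mu}[C]$, and the identification of $\hat\mu$ as the solution of the stated variational problem is the content of the previous paragraph.

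For the entropy identity I would compute directly from the tilted density:
\[
D_{KL}(\hat\mu\|\mu_0)=E_{\hat\mu}\!\left[\log\frac{d\hat\mu}{d\mu_0}\right]=E_{\hat\mu}\bigl[\alpha^\ast C-\Lambda(\alpha^\ast)\bigr]=\alpha^\ast E_{\hat\mu}[C]-\Lambda(\alpha^\ast)=\alpha^\ast\Lambda'(\alpha^\ast)-\Lambda(\alpha^\ast)=R,
\]
where the last equality is the first-order condition. Hence $D_{KL}(\hat\mu\|\mu_0)=D_{KL}(\mu\|\mu_0)$.

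The main technical point to be careful about is the justification of $\Lambda'(\alpha)=E_{\hat\mu_\alpha}[C]$, i.e.\ differentiation under the expectation and finiteness of $E_{\mu_0}[|C|e^{\alpha C}]$ in a neighborhood of $\alpha^\ast$; this follows from analyticity of the cumulant generating function on the interior of its effective domain together with the finite-entropy hypothesis, but it carries the implicit requirement that $\alpha^\ast$ lie in that interior. A secondary, more routine point is the existence and nonvanishing of a minimizer, which is handled by the blow-up of $\psi$ as $\alpha\downarrow0$ noted above; since the statement merely assumes a minimizer $\alpha^\ast$ is given, this may be relegated to a remark.
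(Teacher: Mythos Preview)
Your proof is correct and follows essentially the same approach as the paper: both differentiate $\psi$ in $\alpha$, identify the derivative with $E_{\hat\mu}[C]-\psi$ (you via $\Lambda'(\alpha)=E_{\hat\mu_\alpha}[C]$, the paper by direct quotient rule), use the first-order condition to conclude $\psi(\alpha^\ast,\underline\gamma)=E_{\hat\mu}[C]$, and then derive the entropy identity. The only cosmetic difference is that for $D_{KL}(\hat\mu\|\mu_0)=D_{KL}(\mu\|\mu_0)$ the paper substitutes the variational representation of $F(\alpha^\ast,\underline\gamma)$ back into $\psi$, whereas you compute $D_{KL}(\hat\mu\|\mu_0)$ directly from the tilted density; these are the same computation in different guises, and your added remarks on differentiability and the $R=0$ edge case are extra rigor the paper omits.
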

\begin{proof}
$\psi$ is differentiable in $\alpha$, with derivative
\begin{align*}
    \psi_\alpha(\alpha, \underline\gamma)&=\frac{1}{\alpha} \frac{1}{E_{\mu_0}[e^{\alpha C}]}E_{\mu_0}[e^{\alpha C} C]-\frac{1}{\alpha^2}\log E_{\mu_0}[e^{\alpha C}]-\frac{1}{\alpha^2}D_{KL}(\mu||\mu_0)\\
    &=\frac{1}{\alpha}\left(E_{\hat \mu }[C]-\psi(\alpha, \underline \gamma)\right),
\end{align*}
where
\[\frac{d\hat \mu}{d\mu_0}=\frac{1}{E_{\mu_0}[e^{\alpha^\ast C}]}e^{\alpha^\ast C}\]
is the solution to the variational problem
\[\sup_{\nu\sim \mu_0}\left\{E_{\nu}[C(\underline\gamma)]-\frac{1}{\alpha}D_{KL}(\nu||\mu_0)\right\}.\]
If $\alpha^\ast=\alpha^\ast(\underline \gamma)$ is a minima it satisfies  
\begin{equation*}
    \psi_\alpha(\alpha^\ast, \underline\gamma)=\frac{1}{\alpha^\ast}\left(E_{\hat \mu }[C]-\psi(\alpha^\ast, \underline \gamma)\right)=0.
\end{equation*}
In this case, the robustness bound \eqref{eq:bound-with-alpha-1} becomes 
\begin{equation*}
     E_{\mu}[C]\leq E_{\hat \mu}[C].
\end{equation*}
Additionally, plugging in $\alpha^\ast$ to $\psi$ gives 
\begin{align*}
    \psi(\alpha^\ast, \underline\gamma)&= \sup_{\nu\sim \mu_0}\left\{E_{\nu}[C(\underline\gamma)]-\frac{1}{\alpha^\ast}D_{KL}(\nu||\mu_0)\right\}+\frac{1}{\alpha^\ast}D_{KL}(\mu||\mu_0)\\
    &=E_{\hat \mu}[C(\underline\gamma)]-\frac{1}{\alpha^\ast}D_{KL}(\hat \mu||\mu_0)+\frac{1}{\alpha^\ast}D_{KL}(\mu||\mu_0)\\
    &= \psi(\alpha^\ast, \underline\gamma)-\frac{1}{\alpha^\ast}D_{KL}(\hat \mu||\mu_0)+\frac{1}{\alpha^\ast}D_{KL}(\mu||\mu_0),
\end{align*}
implying that 
\[D_{KL}(\hat \mu||\mu_0)=D_{KL}(\mu||\mu_0).\]
\end{proof}

\subsection{Risk-sensitive control of non-symmetrically interacting particles with one that is symmetrically coupled}
As a corollary of the above, we have the following.
\begin{corollary}
    Let $\mu$ correspond to a system model A or B with non-symmetric interaction terms $v_t^i$ but with a symmetric mean-field cost $C$. Then we can bound the non-symmetric problem $\inf_{\underline\gamma}E_{\mu}[C(\underline\gamma)]$ by a symmetric problem $\inf_{\underline\gamma}E_{\hat \mu}[C(\underline\gamma)]$, where $\frac{d\hat \mu}{d\mu_0}\propto e^{\alpha^\ast C(\underline \gamma)}$ for some $\alpha^\ast\in \mathbb R$. 
\end{corollary}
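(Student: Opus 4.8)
The plan is to derive this corollary directly from Theorem \ref{thmoptimizeparam} applied to the mean-field cost functional $C = c_N$ (or its limit $J$). First I would observe that the hypothesis provides a system (Model A or B) whose interaction terms $v_t^i$ are not symmetric across agents, but whose cost functional $C(\underline\gamma)$ is of the symmetric mean-field form \eqref{generalCostFnMF}. Let $\mu_0$ denote the nominal non-interacting reference measure and $\mu = \mu_\gamma$ the true (non-symmetric) measure, which is equivalent to $\mu_0$ by Girsanov under the standing Novikov/finite-energy assumption. For each fixed $\underline\gamma$, the bound \eqref{eq:bound-with-alpha-1} gives $E_\mu[C(\underline\gamma)] \leq \psi(\alpha,\underline\gamma)$ for every $\alpha$, and in particular for a minimizer $\alpha^\ast = \alpha^\ast(\underline\gamma)$, provided one exists. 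Theorem \ref{thmoptimizeparam} then tells us that at such a minimizer the bound collapses to $E_\mu[C(\underline\gamma)] \leq E_{\hat\mu}[C(\underline\gamma)]$, where $\hat\mu$ has density $\tfrac{d\hat\mu}{d\mu_0} \propto e^{\alpha^\ast C(\underline\gamma)}$.

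The key point to flesh out is \emph{why} $\hat\mu$ represents a ``symmetric'' problem. Since $C = c_N$ is built from the symmetric combination $\tfrac1N\sum_i c(X^i,U^i,\tfrac1N\sum_m X^m,\tfrac1N\sum_m U^m)$, the Radon--Nikodym density $e^{\alpha^\ast C(\underline\gamma)} / E_{\mu_0}[e^{\alpha^\ast C(\underline\gamma)}]$ is invariant under permutations of the agent indices (given that $\mu_0$ itself, being the law of i.i.d. copies, is exchangeable and that the $b_i,\sigma_i$ are taken identical in the symmetric regime, exactly as in the symmetric part of Lemma \ref{lem:robust-kld}). Hence the tilted measure $\hat\mu$ is exchangeable, and the effective perturbation it induces on the reference dynamics — read off from the density via Girsanov in the manner of Lemma \ref{lem:robust-kld} — is a symmetric interaction term. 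I would then take the infimum over $\underline\gamma \in \Gamma$ on both sides: $\inf_{\underline\gamma} E_\mu[C(\underline\gamma)] \leq \inf_{\underline\gamma} E_{\hat\mu}[C(\underline\gamma)]$, where on the right $\hat\mu$ is understood to depend on $\underline\gamma$ through $\alpha^\ast(\underline\gamma)$, which is precisely the stated conclusion.

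The main obstacle I anticipate is the existence (and measurable selection) of the minimizer $\alpha^\ast(\underline\gamma)$ of $\psi(\alpha,\underline\gamma)$: the function $\psi(\alpha,\underline\gamma) = \tfrac1\alpha \log E_{\mu_0}[e^{\alpha C}] + \tfrac1\alpha D_{KL}(\mu\|\mu_0)$ is only guaranteed to be differentiable, not obviously coercive or convex in $\alpha$, and one must argue that an interior minimizer exists (e.g. by the limiting behaviour in the Proposition preceding Theorem \ref{thmoptimizeparam} when $C$ is essentially bounded, which forces $F(\alpha,\underline\gamma)$ to interpolate between $E_{\mu_0}[C]$ and the essential supremum, combined with the penalty term $\tfrac1\alpha D_{KL}$ blowing up or vanishing appropriately). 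A secondary subtlety is ensuring that $\alpha^\ast$ can be chosen so that $\hat\mu \sim \mu_0$ remains a genuine probability measure equivalent to the reference — this needs $E_{\mu_0}[e^{\alpha^\ast C}] < \infty$, i.e. a finite-entropy / exponential-integrability condition on $C$ under $\mu_0$, which should be invoked as a hypothesis or inherited from the essential boundedness of the mean-field cost. I would state these integrability requirements explicitly rather than grind through them, and note that under them the corollary is immediate from Theorem \ref{thmoptimizeparam} together with the exchangeability observation above.
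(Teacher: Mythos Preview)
Your proposal is correct and matches the paper's approach: the paper states this result with no proof beyond the phrase ``As a corollary of the above,'' treating it as immediate from Theorem \ref{thmoptimizeparam}. You have in fact supplied more detail than the paper does --- in particular, your explicit exchangeability argument (that $\mu_0$ is i.i.d.\ and $C$ is permutation-invariant, hence the tilted density $e^{\alpha^\ast C}/E_{\mu_0}[e^{\alpha^\ast C}]$ and therefore $\hat\mu$ are exchangeable) is exactly the mechanism the paper leaves implicit, and your caveats about the existence of $\alpha^\ast$ and the integrability of $e^{\alpha^\ast C}$ are genuine issues that the paper does not address either.
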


The benefit of such a result is that the classical mean-field theory is now applicable as both the interaction terms and the cost terms are now symmetric and coupled via mean-field under the nominal model; this would not have been possible if the dynamics were not symmetrically coupled.

\subsection{Stability of risk-sensitive solutions to small interactions}\label{stabilitySectionPerturb}
Optimal risk-sensitivity can lead to convergence of the robustness bound \eqref{eq:bound-with-alpha-1} to an equality. By choosing $\alpha$ appropriately, we can recover the risk-neutral problem in the small interaction limit. 
\begin{proposition}
    Consider the setup of Theorem \ref{theorem:main} or \ref{theorem:main-Gaussian-measure} with $v_{t,n}^i=\varepsilon_n u_t^i$ and $\varepsilon_n\to 0$. Then there is a choice of $\alpha_n$ so that relation \eqref{eq:bound-with-alpha-1} is an equality in the limit as $n\to \infty$.
\end{proposition}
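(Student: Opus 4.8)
The plan is to track how the robustness bound \eqref{eq:bound-with-alpha-1} degenerates as the interaction strength $\varepsilon_n$ goes to zero, and to exploit the extra degree of freedom $\alpha$ to kill the gap in the limit. First I would observe that when $v_{t,n}^i=\varepsilon_n u_t^i$, Lemma \ref{lem:robust-kld} (or Lemma \ref{relativeEnt2ndMoment} in the arbitrary-coupling case) gives $D_{KL}(\mu_n\|\mu_0)=\tfrac{\varepsilon_n^2}{2}\sum_i E_{\mu_n}\big[\int_0^T |u_s^i/\sigma_i|^2\,ds\big]=:\tfrac{\varepsilon_n^2}{2}R_n$, and $R_n\to R$ (a bounded positive quantity, by the uniform $L^2$ bound on $u$ and the assumption $\sigma_i\ge\sigma>0$) as the law $\mu_n$ converges weakly to $\mu_0$. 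Likewise, by a Taylor expansion of the cumulant generating function in $\alpha$ around $0$,
\begin{equation*}
F(\alpha,\underline\gamma)=\frac1\alpha\log E_{\mu_0}[e^{\alpha C}]=E_{\mu_0}[C]+\frac{\alpha}{2}\operatorname{Var}_{\mu_0}(C)+o(\alpha),
\end{equation*}
valid under the finite-entropy/Novikov hypotheses which give enough integrability of $C$ under $\mu_0$. At the same time, $E_{\mu_n}[C]\to E_{\mu_0}[C]$ by weak convergence $\mu_n\to\mu_0$ (using continuity/uniform integrability of $C$, which follows from the affine-growth assumption (A2) together with the Novikov bound). So both sides of \eqref{eq:bound-with-alpha-1} converge to $E_{\mu_0}[C]$ provided the correction terms vanish.

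The key step is then the choice of $\alpha_n$. Writing out \eqref{eq:bound-with-alpha-1} with $\mu=\mu_n$ and $\alpha=\alpha_n$:
\begin{equation*}
\psi(\alpha_n,\underline\gamma)=F(\alpha_n,\underline\gamma)+\frac{1}{\alpha_n}D_{KL}(\mu_n\|\mu_0)
= E_{\mu_0}[C]+\frac{\alpha_n}{2}\operatorname{Var}_{\mu_0}(C)+\frac{\varepsilon_n^2 R_n}{2\alpha_n}+o(\alpha_n)+o\!\Big(\frac{\varepsilon_n^2}{\alpha_n}\Big).
\end{equation*}
The two leading error terms are balanced (and minimized over $\alpha_n>0$, by AM--GM) by taking $\alpha_n$ of order $\varepsilon_n$, e.g. $\alpha_n=\varepsilon_n\sqrt{R_n/\operatorname{Var}_{\mu_0}(C)}$ (or any $\alpha_n\to 0$ with $\varepsilon_n^2/\alpha_n\to 0$, if one only wants the gap to vanish rather than be optimized). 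With this choice the right-hand side of \eqref{eq:bound-with-alpha-1} tends to $E_{\mu_0}[C]$, i.e. to $E_{\mu_n}[C]$ in the limit, so the inequality becomes an equality in the limit $n\to\infty$. One then takes the $\inf$ over $\underline\gamma$ and/or the $\limsup_N$ as in Theorems \ref{theorem:main}/\ref{theorem:main-Gaussian-measure}, noting the bound holds $\gamma$-by-$\gamma$.

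The main obstacle I anticipate is making the two limits --- $n\to\infty$ (interaction strength) and the implicit convergence $\mu_n\to\mu_0$ with the attendant convergence $E_{\mu_n}[C]\to E_{\mu_0}[C]$ and $R_n\to R$ --- rigorous and uniform enough, and in particular controlling the $o(\cdot)$ remainder in the expansion of $F$ uniformly as the base measure changes; this needs the Novikov condition to upgrade to a uniform exponential-integrability bound on $C$ (say $\sup_n E_{\mu_n}[e^{\lambda|C|}]<\infty$ for some $\lambda>0$), so that the cumulant generating function is analytic near $0$ with controlled coefficients and $C$ is uniformly integrable along $\mu_n$. A secondary subtlety is the degenerate case $\operatorname{Var}_{\mu_0}(C)=0$ (then $C$ is $\mu_0$-a.s. constant and the statement is trivial), and the sign of $\alpha$: the argument above uses $\alpha_n>0$; the risk-seeking side ($\alpha_n<0$) requires $C$ bounded below or an analogous lower exponential moment, which I would just note falls under the same hypotheses.
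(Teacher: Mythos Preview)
Your proposal is correct and follows the same core argument as the paper: use Lemma~\ref{lem:robust-kld} to see that $D_{KL}(\mu_n\|\mu_0)=O(\varepsilon_n^2)$, then choose $\alpha_n\to 0$ with $\varepsilon_n^2/\alpha_n\to 0$ so that both the entropy correction and the risk-sensitive gap $F(\alpha_n,\underline\gamma)-E_{\mu_0}[C]$ vanish, forcing both sides of \eqref{eq:bound-with-alpha-1} to converge to $E_{\mu_0}[C]$. The paper's proof is exactly this, stated in three lines.

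The extra machinery you bring in---the Taylor expansion of $F$ in $\alpha$, the AM--GM balancing to identify the \emph{optimal} scale $\alpha_n\sim\varepsilon_n$, and the uniform-integrability concerns for $E_{\mu_n}[C]\to E_{\mu_0}[C]$---is not needed for the statement as written, which only asks that the gap vanish in the limit (you yourself note the simpler choice ``any $\alpha_n\to 0$ with $\varepsilon_n^2/\alpha_n\to 0$'' suffices). The paper simply invokes $\lim_{\alpha\to 0}F(\alpha,\underline\gamma)=E_{\mu_0}[C]$ from the cited proposition rather than expanding $F$, and does not dwell on the convergence of the left-hand side. Your additional analysis is correct and gives a sharper picture of the rate, but it is a refinement rather than a different route.
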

\begin{proof}
    Using Lemma \ref{lem:robust-kld}, \eqref{eq:bound-with-alpha-1} becomes 
    \begin{equation}\label{eq:alpha-to-zero}
         E_{\mu^{\varepsilon_n}}[C(\underline\gamma)]\leq F(\alpha, \underline \gamma)+\frac{\varepsilon_n^2}{\alpha} D_{KL}(\mu^1||\mu_0),
    \end{equation}
    where $\mu^{\varepsilon}$ is the measure corresponding to system model A or B with $v_t^i=\varepsilon u_t^i$ and $\mu^1$ is the law of system model A or B with $\varepsilon =1$. In this case, we may let $\alpha_n$ be so that $\lim_{n\to\infty} \frac{\varepsilon_n^2}{\alpha_n}=0$ and $\lim_{n\to\infty} \alpha_n=0$. Taking the limit $n\to\infty$ in equation \eqref{eq:alpha-to-zero} concludes the proof. 
\end{proof}

\subsection{The mean-field limit as the nominal model}

For a mean-field model, in the limit of large $N$, if we have that the agents apply symmetric policies, then it is known that the strategic measure converges, in some sense, to a collection of independent product measures \cite{Girsanov-path-1,Girsanov-path-2,Girsanov-path-3,Girsanov-path-4}. Accordingly, it may be more appropriate, for some applications, to consider such a mean-field limit as the reference model. Here, the robustness can be regarded with regard to an uncertainty on the number of agents in the model but with the information that the number is not small. 

Notably, consider the model:
\begin{equation*}
    d\tilde X^i_t=(b(\tilde X^i_t,\zeta^N_t, U^i_t) +\sigma(\tilde X^i_t)v_t^1) dt+\sigma(\tilde X^i_t) dB^i_t, \quad i=1,\cdots,N \label{trueModel-MFB}
\end{equation*}
where we assume that $X^i_0$ are i.i.d. random variables with finite second moments. 
We write the above as:
\begin{equation*}
    d\tilde X^i_t=b(\tilde X^i_t, \mu_t, U^i_t)dt + \bigg(b(\tilde X^i_t,\zeta^N_t, U^i_t) - b(\tilde X^i_t,\mu_t, U^i_t) \bigg) dt+\sigma(\tilde X^i_t) dB^i_t , \quad i=1,\cdots,N \label{trueModel-BMFB}
\end{equation*}
which is now to be studied as an instance of the model considered earlier in (\ref{trueModel-B}) with
\[v_t^i = \bigg(b(\tilde X^i_t,\zeta^N_t, U^i_t) - b(\tilde X^i_t,\mu_t, U^i_t) \bigg),\]
so that we have the decoupled model
\begin{equation}
    d X^i_t=b( X^i_t, \mu_t, U^i_t)dt +\sigma( X^i_t) dB^i_t,  \quad i=1,\cdots,N \label{decoupMFL}
\end{equation}
Note that, here if $\mu_t$ is the probability measure defined by $X^i_t$ under an identical Markov policy applied by each agent, then (\ref{decoupMFL}) would reduce to what is known as the McKean-Vlasov dynamics. Thus, models of the type above are known to be related to solutions of centralized McKean-Vlasov control problems under an apriori symmetry in control policies \cite{lacker2017limit}). It is known that $\zeta^N$ concentrates around $\mu$ as $N$ increases with explicit convergence rates.  

Thus, the deviation in the empirical mean from the asymptotic limit of the empirical mean serves as the coupling term. In view of this, our risk-sensitivity bound writes as, by adapting (\ref{robustMeanBound333}) and (\ref{eq:bound-with-alpha-1}), for any $\alpha > 0$ and any policy $\underline \gamma$:

\begin{equation}\label{RobustMeanBound333111}
\begin{split}
    &    \limsup_{N\to\infty} E^{\underline \gamma}_\mu[c_N(\omega^{[1,N]},U^{[1,N]})]   \\
    &\leq  \limsup_{N\to\infty} \frac{1}{N}  \frac{1}{\alpha} \log E^{\underline \gamma}_{\mu_0} \left[e^{\alpha N c_N(\omega^{[1,N]},U^{[1,N]})}\right]+  \frac{1}{2\sigma^2}\frac{1}{\alpha}  E^{\underline \gamma}_{\mu} \left[\int_0^T (v^1_s)^2ds\right].
    \end{split}
    \end{equation}

We will focus on the first term on the right hand side in equation \eqref{RobustMeanBound333111} and vary $\alpha$. We note that $\mu_t$ will also change depending on the parameters as the control policies will change.

Now, consider
\[ \lim_{\alpha \to 0} \bigg( \limsup_{N\to\infty} \frac{1}{N}  \frac{1}{\alpha} \log E^{\underline \gamma}_{\mu_0} \left[e^{\alpha N c_N(\omega^{[1,N]},U^{[1,N]})}\right] \bigg),\]
where the limit is well-defined as the expression is monotonically increasing in $\alpha$. By interchanging limit and $\inf$ we get the lower bound
\begin{align}
&\inf_{\underline \gamma\in \Gamma} \lim_{\alpha \to 0} \bigg( \limsup_{N\to\infty} \frac{1}{N}  \frac{1}{\alpha} \log E^{\underline \gamma}_{\mu_0} \left[e^{\alpha N c_N(\omega^{[1,N]},U^{[1,N]})}\right] \bigg) \nonumber \\
& \geq \limsup_{\alpha \to 0} \inf_{\underline \gamma\in \Gamma} \bigg( \limsup_{N\to\infty} \frac{1}{N}  \frac{1}{\alpha} \log E^{\underline \gamma}_{\mu_0} \left[e^{\alpha N c_N(\omega^{[1,N]},U^{[1,N]})}\right] \bigg).
\end{align}
If we have a policy sequence $\underline \gamma_n$ which attains the infimum for a sequence of $\alpha_n \downarrow 0$, this implies that
\[\limsup_{\alpha_n \to 0} \inf_{\underline \gamma\in \Gamma} \bigg( \limsup_{N\to\infty} \frac{1}{N}  \frac{1}{\alpha_n} \log E^{\underline \gamma}_{\mu_0} \left[e^{\alpha_n N c_N(\omega^{[1,N]},U^{[1,N]})}\right] \bigg) \]   
may provide a bound on the optimal cost provided that the relative entropy term in \eqref{RobustMeanBound333111} can be cautiously studied via a measure concentration analysis as $N \to \infty$. 

Let
\[J_{\alpha}(\underline \gamma) := \limsup_{N\to\infty} \frac{1}{N}  \frac{1}{\alpha} \log E^{\underline \gamma}_{\mu_0} \left[e^{\alpha N c_N(\omega^{[1,N]},U^{[1,N]})}\right] \]
and
\[\inf_{\underline \gamma} J_{\alpha^n}(\underline \gamma) = J_{\alpha^n}(\underline \gamma^n),\]
be so that $\underline \gamma^n$ is optimal for $\alpha_n$. 

We will consider a problem in which the systems are only coupled through the dynamics and not via the cost. Accordingly, we consider instead of (\ref{generalCostFnMF}) the cost function
\begin{equation}\label{generalCostFnMF2}
c_N(X^{[1,N]},U^{[1,N]}) = \frac{1}{N} \sum_{i=1}^N c\left(X^i, U^i\right)
\end{equation}

For such problems, for any $\alpha_n$, since the dynamics are decoupled, an optimal policy for $J_{\alpha}(\underline \gamma)$ will be Markovian (see e.g. Section \ref{principEigenEx}). This will be true even if we allow for policies that are centralized, that is $\sigma(X^1_{[0,t]},\cdots,X^N_{[0,t]})$-measurable. We also assume the following.
\begin{assumption}\label{LipBoundB1}
Uniform over $x^i,u^i$, we have
\[|b(x^i,\zeta^N, u^i) - b(x^i,\mu, u^i)| \leq C W_1(\zeta^N_t,\mu_t),\]
where $W_1$ is the Wasserstein metric of order $1$ and $C < \infty$. 
\end{assumption}

\begin{proposition}
    Let $U^i$ be Markovian policies only depending on $X^i$ and let $\sigma$ be constant. Let $b$ be so that 
    \begin{equation}
        \sup_{t\geq 0, x\neq y, m\in \mathcal P(\mathbb R),\underline \gamma \in \Gamma}\left\{\frac{|b(t,x,U^i,m)-b(t,y,U^i,m)|}{|x-y|}\right\}<\infty,
    \end{equation}
    and assume Assumption \ref{LipBoundB1}. Assume that $b$ is twice differentiable with respect to $m$ with Lipschitz derivatives uniform in the other variables. Also assume that $\mu$ has greater than $4$ moments. 
Then we have that 
\begin{equation}\label{concentBound1}
    \frac{1}{2\sigma^2}\frac{1}{\alpha_n}  E^{\underline \gamma}_{\mu} \left[\int_0^T (v^1_s)^2ds\right]\leq K \frac{1}{2\sqrt{N}\alpha_n\sigma^2},
\end{equation}
where $K$ is independent of $U$ and $N$.
\end{proposition}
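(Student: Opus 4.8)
The plan is to control the coupling term $E^{\underline\gamma}_\mu[\int_0^T (v^1_s)^2 ds]$ by bounding $(v^1_s)^2$ pointwise via Assumption \ref{LipBoundB1} and then invoking a quantitative propagation-of-chaos / empirical-measure concentration estimate. First I would write, using Assumption \ref{LipBoundB1},
\[
|v^1_s|^2 = \big| b(\tilde X^1_s,\zeta^N_s,U^1_s) - b(\tilde X^1_s,\mu_s,U^1_s)\big|^2 \leq C^2\, W_1(\zeta^N_s,\mu_s)^2,
\]
so that
\[
E^{\underline\gamma}_\mu\!\left[\int_0^T (v^1_s)^2 ds\right] \leq C^2 \int_0^T E^{\underline\gamma}_\mu\!\left[ W_1(\zeta^N_s,\mu_s)^2\right] ds.
\]
This reduces everything to a uniform-in-time bound on $E[W_1(\zeta^N_s,\mu_s)^2]$ of order $1/N$, i.e. $E[W_1(\zeta^N_s,\mu_s)^2] \leq \tilde K/N$ uniformly in $s\in[0,T]$ and uniformly over admissible (Markovian, decoupled) policies; combining these gives \eqref{concentBound1} with $K = C^2 T \tilde K /\sigma^2$ (up to the constant factors already displayed), which is independent of $U$ and $N$.

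The key steps, in order, are as follows. (i) Reduce to the decoupled McKean--Vlasov reference dynamics \eqref{decoupMFL}: since under the nominal model the particles $X^i$ are i.i.d. with common law $\mu_s$ (each agent applying the same Markov policy, or at least policies all producing law $\mu_s$ on the relevant marginal), the empirical measure $\zeta^N_s$ of the decoupled particles is an empirical measure of $N$ i.i.d. samples from $\mu_s$. (ii) Apply a standard Wasserstein law-of-large-numbers rate for empirical measures of i.i.d. samples on $\mathbb R$ (e.g. Fournier--Guillin): for samples with more than $4$ moments one has $E[W_1(\zeta^N_s,\mu_s)] \leq c\, N^{-1/2}$ and $E[W_1(\zeta^N_s,\mu_s)^2]\leq c'\,N^{-1}$, with $c,c'$ depending only on the moment bounds on $\mu_s$, which are uniform in $s$ and in the policy by the affine growth / Lipschitz hypotheses (A1)--(A2) together with the assumed $>4$ moments on $\mu$ and the regularity of $b$ in $m$. (iii) Integrate in $s$ over $[0,T]$, substitute into the coupling term, divide by $2\sigma^2\alpha_n$, and read off that the right-hand side is $\leq K/(2\sqrt N\,\alpha_n\sigma^2)$ — noting that we are in dimension $n=1$ here (the proposition writes $\mathcal P(\mathbb R)$), where the $W_1$ rate is genuinely $N^{-1/2}$; in higher dimension the exponent would degrade, which is presumably why the statement is phrased over $\mathbb R$.

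The main obstacle is step (i)--(ii) done \emph{uniformly over policies}: one must be sure the moment bounds on the marginal law $\mu_s$ of the reference process, and hence the constant in the Wasserstein rate, do not depend on the (Markov) control being applied. This follows because the control enters only through $b(x,\mu_s,u)$ with $u$ ranging over the compact set $\mathbb U$ and $b$ satisfying the affine growth bound (A2) uniformly in $u$, so a Gronwall estimate on $E\|X^i_s\|^{p}$ for $p>4$ gives a bound depending only on $C_0$, $T$, the initial moments, and $p$ — not on $\gamma$. A secondary technical point is that $\zeta^N_s$ in the coupling term is the empirical measure of the \emph{true} (coupled) particles $\tilde X^i$ rather than the decoupled ones, but since the expectation is under $\mu$ and we only need the law of the marginals, one can either argue via the change of measure already established in Lemma \ref{lem:robust-kld} or, more simply, note that Assumption \ref{LipBoundB1} is a pathwise Lipschitz bound and the concentration estimate for $W_1(\zeta^N_s,\mu_s)$ under $\mu$ can be obtained from the corresponding propagation-of-chaos result for the interacting system (the deviation being itself $O(N^{-1/2})$ by the cited references \cite{Girsanov-path-1,Girsanov-path-2,Girsanov-path-3,Girsanov-path-4}). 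I would flag this point explicitly and use whichever of the two routes is cleanest given the conventions fixed earlier in the paper.
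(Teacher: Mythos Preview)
Your overall architecture --- bound $|v^1_s|^2$ via Assumption \ref{LipBoundB1} by $C^2 W_1(\zeta^N_s,\mu_s)^2$, then control the expected squared Wasserstein distance, then integrate in time --- is exactly the paper's approach. The difference is in how the Wasserstein term is handled, and here your primary route has a gap that you have misdiagnosed as a ``secondary technical point.''

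The expectation $E^{\underline\gamma}_\mu$ is taken under the \emph{true} (interacting) model, so the particles $\tilde X^i$ are \emph{not} i.i.d.\ and Fournier--Guillin for i.i.d.\ empirical measures does not apply directly. Your step (i), which ``reduces to the decoupled McKean--Vlasov reference dynamics,'' is not justified: there is no change of measure being performed at this stage, and Lemma \ref{lem:robust-kld} does not give you an estimate on $E_\mu[W_1^2]$ in terms of $E_{\mu_0}[W_1^2]$ without a further argument. What is actually needed is a quantitative propagation-of-chaos estimate for the \emph{interacting} particle system, bounding $E_\mu[W_2(\zeta^N_t,\mu_t)^2]$ directly. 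The paper does precisely this: it passes from $W_1$ to $W_2$ (via $W_1\leq W_2$) and then invokes \cite[Theorem 3.8]{de2021backward}, which yields $E_\mu[W_2(\zeta^N_t,\mu_t)^2]\leq K_U^+/\sqrt{N}$ for the interacting system. This is your ``fallback'' route, but it is not secondary --- it is the entire content of the argument, and it is the reason the proposition carries the otherwise unexplained hypotheses that $b$ be twice differentiable in $m$ with Lipschitz derivatives and that $\mu$ have more than four moments: those are the assumptions of the cited propagation-of-chaos theorem, not of Fournier--Guillin. Uniformity of $K$ in $U$ then comes from the uniform Lipschitz bound on $b$ in $x$ stated in the proposition.
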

\begin{proof}

    Define $b_U(t,x,m)=b(t,x,U,m)$. Then we have by the assumptions on $b$ and \cite[Theorem 3.8]{de2021backward} that 
    \begin{align*}
         \frac{1}{2\sigma^2}\frac{1}{\alpha_n}  E^{\underline \gamma}_{\mu} \left[\int_0^T (v^1_s)^2ds\right]&\leq \frac{1}{2\sigma^2}\frac{1}{\alpha_n}  E^{\underline \gamma}_{\mu} \left[\int_0^T (W_1(\zeta_t^N, \mu_t))^2ds\right]\\
         &\leq \frac{C}{2\sigma^2}\frac{1}{\alpha_n}  E^{\underline \gamma}_{\mu} \left[\int_0^T (W_2(\zeta_t^N, \mu_t))^2ds\right]\\
         &\leq \frac{CK_U^+ T}{2\sigma^2}\frac{1}{\alpha_n\sqrt{N}},
    \end{align*}
where $K_U^+$ is independent of $N$. By Jensen's inequality, $W_2 \geq W_1$ in the above. By the assumptions on $b_U$, we have that $K:=\sup_U C TK_U^+<\infty$. 
\end{proof}

Accordingly, for any  symmetric policy, $\alpha_n$ (depending on $N$), can be designed so that
\[ \frac{1}{2\sigma^2}\frac{1}{\alpha_n}  E^{\underline \gamma}_{\mu} \left[\int_0^T (v^1_s)^2ds\right] \to 0,\]
due to measure concentration involving copies of i.i.d. stochastic flows.

Thus, if we can show that $\underline \gamma_n$ has a converging subsequence and the cost is lower semi-continuous, then we will be able to establish the optimality of symmetric policies. Now, we have that on the space of Markov policies the map $\underline \gamma \mapsto X_{[0,T]}$ is weakly continuous when the policy space is endowed with the {\it Borkar topology} \cite{borkar1989topology,pradhan2022near} and this applies also for resulting McKean-Vlasov model given in (\ref{decoupMFL}) where $\mu_t$ is the law of $X_t$ under Assumption \ref{LipBoundB1} by \cite[Theorem 4.18]{carmona2018probabilistic} (note that by Jensen's inequality $W_2 \geq W_1$ thus \cite[Theorem 4.18]{carmona2018probabilistic} is applicable under Assumption \ref{LipBoundB1}). 
 
Accordingly, for any $\alpha$ the cost is lower semi-continuous in the control.  Furthermore, the space of policies is compact under this topology \cite{borkar1989topology}. Accordingly, 
\begin{eqnarray}\label{inequalitiesBound}
&&\liminf_{\alpha_n \downarrow 0} J_{\alpha_n}(\underline \gamma^n) \nonumber \\
&&\geq \liminf_{\alpha_n \downarrow 0} J_{0}(\underline \gamma^n) \nonumber \\
&&\geq J_{0}(\underline \gamma^{\infty})
\end{eqnarray}
for any limit point $\underline \gamma^{\infty}$ of the sequence $\underline \gamma^n$. The first inequality follows since the cost is increasing in the risk-parameter $\alpha$ for a given policy. This follows from the fact that if we view 

\begin{eqnarray}
        F(\alpha, \underline \gamma)&:=& \frac{1}{\alpha}\log E_{\mu_0}\left[\exp\left(\alpha C(\underline\gamma)\right)\right]. \nonumber \\
&&       = \log \bigg(E_{\mu_0}\left[\bigg(\exp\left( C(\underline\gamma)\right)\bigg)^{\alpha}\right] \bigg)^{\frac{1}{\alpha}},\nonumber 
    \end{eqnarray}
and observing that the $p$-norm of a random variable is non-decreasing in $p$ for $p > 0$, which follows from Jensen's inequality. 

The second inequality is due to lower semi-continuity in the control policies (since the path measure is weakly continuous in the policy as discussed earlier, and the cost is lower semi-continuous in the path measure).

Furthermore, since $\underline \gamma^{\infty}$ is symmetric, the inequalities in (\ref{inequalitiesBound}) are all equalities since we have:

\begin{eqnarray}\label{inequalitiesBound2}
&& J_{0}(\underline \gamma^{\infty}) \nonumber \\
&&=\lim_{\alpha_n \downarrow 0} J_{\alpha_n}(\underline \gamma^{\infty}) \nonumber \\
&&= \liminf_{\alpha_n \downarrow 0} J_{\alpha_n}(\underline \gamma^{\infty}) \nonumber \\
&& \geq \liminf_{\alpha_n \downarrow 0} J_{\alpha_n}(\underline \gamma^n) \nonumber \\
&&\geq \liminf_{\alpha_n \downarrow 0} J_{0}(\underline \gamma^n) \nonumber \\
&&\geq J_{0}(\underline \gamma^{\infty})\nonumber
\end{eqnarray}

Accordingly, given (\ref{concentBound1}), an agent-wise weak limit of $\underline \gamma_n$, $\underline \gamma^{\infty}$, will satisfy the relation:
\begin{align}\label{robustMeanBound333_111}
    & \inf_{\underline \gamma}   \limsup_{N\to\infty} E^{\underline \gamma}_\mu[c_N(\omega^{[1,N]},U^{[1,N]})]  \leq  J_{0}(\underline \gamma^{\infty}).
    \end{align}

Finally, observe that

\begin{align}
J_{0}(\underline \gamma^{\infty})
&=\lim_{\alpha \to 0} \limsup_{N\to\infty} \frac{1}{N}  \frac{1}{\alpha} \log E^{\underline \gamma^{\infty}}_{\mu_0} \left[e^{\alpha N c_N(\omega^{[1,N]},U^{[1,N]})}\right] \nonumber \\
&=\limsup_{N\to\infty} \frac{1}{N}    E^{\underline \gamma^{\infty}}_{\mu_0} \left[c_N(\omega^{[1,N]},U^{[1,N]})\right]
=\limsup_{N\to\infty} \frac{1}{N}   E^{\underline \gamma^{\infty}}_{\mu} \left[c_N(\omega^{[1,N]},U^{[1,N]})\right]
\end{align}
since in the limit given a symmetric policy, the attained expected cost values under the true model and the reference model, are identical. 

\begin{theorem}
Consider the model (\ref{trueModel-MFB}) and cost function (\ref{generalCostFnMF2}), with Assumption \ref{LipBoundB1}. In the limit of large $N$, a decentralized policy which only uses local information, and identical across all agents, solving (\ref{decoupMFL}) optimizes the upper bound (\ref{RobustMeanBound333111}) (among all admissible, possibly centralized, policies). 
\end{theorem}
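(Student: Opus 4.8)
The plan is to assemble the estimates already established in this section — the risk-sensitivity bound \eqref{RobustMeanBound333111}, the concentration estimate \eqref{concentBound1}, and the chain \eqref{inequalitiesBound}, \eqref{inequalitiesBound2} — into a single limiting argument in which one first sends $N\to\infty$ at fixed risk parameter $\alpha$, and then lets $\alpha\downarrow0$. Fix $\alpha>0$ and an admissible, possibly centralized, policy $\underline\gamma$. Because the cost \eqref{generalCostFnMF2} couples the agents only through their dynamics and, for the model \eqref{trueModel-MFB}, the nominal/decoupled dynamics \eqref{decoupMFL} are fully separated across agents, the reference risk-sensitive problem $J_\alpha(\underline\gamma)$ splits into $N$ identical single-agent risk-sensitive problems; hence for every $\alpha$ an optimal policy for $J_\alpha$ may be taken Markovian, depending only on $X^i$, and identical across agents, and no centralized policy does strictly better (this is the principal-eigenvalue structure of Section \ref{principEigenEx}). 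Write $\underline\gamma^\alpha$ for such a symmetric Markov optimizer of $J_\alpha$.

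First I would remove the relative-entropy penalty in \eqref{RobustMeanBound333111}. For the coupling term $v^i_t=b(\tilde X^i_t,\zeta^N_t,U^i_t)-b(\tilde X^i_t,\mu_t,U^i_t)$, Assumption \ref{LipBoundB1} gives $|v^1_t|\le C\,W_1(\zeta^N_t,\mu_t)$, and the concentration estimate \eqref{concentBound1} — obtained from \cite[Theorem 3.8]{de2021backward} applied to the i.i.d.\ stochastic flows solving \eqref{decoupMFL} — bounds $\frac{1}{2\sigma^2}\frac{1}{\alpha}E^{\underline\gamma}_\mu[\int_0^T(v^1_s)^2\,ds]$ by $K/(2\sqrt N\,\alpha\,\sigma^2)$ with $K$ independent of the policy and of $N$. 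Since the $N\to\infty$ limit is taken inside $J_\alpha$, \eqref{RobustMeanBound333111} therefore yields $\limsup_{N\to\infty}E^{\underline\gamma}_\mu[c_N]\le J_\alpha(\underline\gamma)$ for every fixed $\alpha>0$ and every symmetric Markov $\underline\gamma$; taking the infimum over policies, $\inf_{\underline\gamma}\limsup_{N\to\infty}E^{\underline\gamma}_\mu[c_N]\le J_\alpha(\underline\gamma^\alpha)$.

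Next I would let $\alpha\downarrow0$ and control the right-hand side through the optimizers. The space of Markov policies is compact in the Borkar topology \cite{borkar1989topology}, so along a sequence $\alpha_n\downarrow0$ the optimizers $\underline\gamma^{\alpha_n}$ have a limit point $\underline\gamma^\infty$, again symmetric and Markov. Weak continuity of the path law — including the McKean–Vlasov law $\mu_t$ under Assumption \ref{LipBoundB1}, via \cite[Theorem 4.18]{carmona2018probabilistic} — together with lower semicontinuity of the cost in the path measure gives $\liminf_n J_0(\underline\gamma^{\alpha_n})\ge J_0(\underline\gamma^\infty)$, while $\alpha\mapsto J_\alpha(\underline\gamma)$ is nondecreasing (the $p$-norm is nondecreasing in $p$, cf.\ the expression for $F(\alpha,\underline\gamma)$), so $J_{\alpha_n}(\underline\gamma^{\alpha_n})\ge J_0(\underline\gamma^{\alpha_n})$. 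Combining with optimality of $\underline\gamma^{\alpha_n}$ for $J_{\alpha_n}$ and with $J_{\alpha_n}(\underline\gamma^\infty)\downarrow J_0(\underline\gamma^\infty)$ forces the whole chain \eqref{inequalitiesBound}, \eqref{inequalitiesBound2} to collapse to equalities, so $\lim_n J_{\alpha_n}(\underline\gamma^{\alpha_n})=J_0(\underline\gamma^\infty)$. Hence $\inf_{\underline\gamma}\limsup_{N\to\infty}E^{\underline\gamma}_\mu[c_N]\le J_0(\underline\gamma^\infty)$, and since under a symmetric policy the coupled and decoupled costs coincide as $N\to\infty$ (again by concentration of $\zeta^N$ around $\mu$), $J_0(\underline\gamma^\infty)=\limsup_{N\to\infty}E^{\underline\gamma^\infty}_\mu[c_N]$. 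Thus the decentralized, locally measurable, identical policy solving \eqref{decoupMFL} attains the optimized (over $\alpha$) upper bound \eqref{RobustMeanBound333111}, and it does so at least as well as any centralized policy because the infima defining $\underline\gamma^\alpha$ range over centralized policies too.

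The main obstacle is not any single estimate but the interchange of the two limits $\alpha\to0$ and $N\to\infty$ together with the uniformity it demands: one must know that the concentration bound \eqref{concentBound1} is genuinely uniform over the admissible policy class, that the risk-sensitive decoupled problems $J_{\alpha_n}$ really do admit symmetric Markov optimizers even within the centralized class, and that Borkar-topology compactness together with lower semicontinuity of the cost survives the McKean–Vlasov dependence of $\mu_t$ on the policy (which is where Assumption \ref{LipBoundB1}, and the fact that $W_2\ge W_1$, enters, making \cite[Theorem 4.18]{carmona2018probabilistic} applicable). Making precise the sense in which $\lim_{\alpha\to0}\limsup_{N\to\infty}$ may be replaced by $\limsup_{N\to\infty}\lim_{\alpha\to0}$, encoded in \eqref{inequalitiesBound}, is the delicate step, and it is exactly here that the symmetry of the limit policy $\underline\gamma^\infty$ is indispensable.
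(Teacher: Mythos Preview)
Your proposal is correct and follows essentially the same route as the paper: the argument preceding the theorem in Section~\ref{sec:optimal-risk-sensitivity} assembles exactly the pieces you list --- the decoupling of the nominal risk-sensitive problem for the additive cost \eqref{generalCostFnMF2} (so that symmetric Markov optimizers $\underline\gamma^{\alpha_n}$ exist even among centralized policies), the concentration bound \eqref{concentBound1} to kill the relative-entropy term, Borkar-topology compactness together with \cite[Theorem 4.18]{carmona2018probabilistic} for lower semicontinuity, monotonicity of $\alpha\mapsto J_\alpha$, and the sandwich \eqref{inequalitiesBound}--\eqref{inequalitiesBound2} forcing $\lim_n J_{\alpha_n}(\underline\gamma^{\alpha_n})=J_0(\underline\gamma^\infty)$ --- and then identifies $J_0(\underline\gamma^\infty)$ with $\limsup_N E^{\underline\gamma^\infty}_\mu[c_N]$ by symmetry. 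Your ordering (first $N\to\infty$ at fixed $\alpha$, then $\alpha\downarrow0$) is a slightly cleaner packaging of the same limit interchange the paper handles via the chain \eqref{inequalitiesBound2}, but the content is identical.
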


The result presented above is related to and consistent with the recent results in \cite{sanjarisaldiyuksel2023CTMF}, with a robustness angle for finite numbers of agents: In both discrete-time \cite{sanjari2021optimality,sanjari2020optimality} and continuous-time \cite{lacker2017limit,sanjarisaldiyuksel2023CTMF}, it has been shown that for problems of the type with (\ref{trueModel-MFB}) and cost function (\ref{generalCostFnMF2}), optimal policies are symmetric and independent. Related risk-sensitive problems have been considered in \cite{bensoussan2017risk,moon2016linear,saldi2020approximate,saldi2022partially,tembine2013risk} in continuous-time.  


\subsection{The discrete-time setup}\label{sec:witsenInfoStructureReview}

We note that the same approach can be applied to the discrere-time setup. For the discrete-time setup, the uncoupled dynamics also exhibit some structural properties which allow for an easier computation of robust solutions. Several reduction results to decoupled teams have been studied, see e.g. \cite[Chapter 3]{YukselBasarBook24}, where a coupled stochastic team is transformed into a decoupled one by change of measure arguments generalizing \cite{wit88}.

In particular, an uncoupled risk-sensitive problem may be convex \cite{YukselSaldiSICON17} whereas the coupled model is not. Under convexity and exchangeability, one can show that optimal policies are symmetric \cite{sanjari2021optimality}. 

%
%

\section{Example: Infinite Horizon Average Cost Criterion and Robustness via an HJB equation}\label{principEigenEx}

The analysis in the paper motivates the study of the risk-sensitive criterion:
\[\inf_{\underline \gamma\in \Gamma} \limsup_{N\to\infty} \frac{1}{N} \log E^{\underline \gamma}_{\mu_0} \left[e^{N c_N(\omega^{[1,N]},U^{[1,N]})}\right].\]

We will study an explicit example in the following. Consider the model given in System Model A with identical particles in (\ref{trueModel}). Suppose that the goal is to minimize the mean-field cost:
\begin{equation*}
    J_N(\underline \gamma_N)=\limsup_{T \to \infty} \frac{1}{T} E^{\underline \gamma_N}_{\mu_0^{N,T}} \left[c^T_N(X^{[1,N]},U^{[1,N]})\right],
\end{equation*}
with
\begin{equation}\label{generalCostFnMFDec}
c^T_N(X^{[1,N]},U^{[1,N]}) = \frac{1}{N} \sum_{i=1}^N c^T\left(X^i, U^i\right),
\end{equation}

where
\[c^T\left(X^i, U^i \right) = \int_0^{T} g(X^i_s, U^i_s) ds \]

where $\underline \gamma_N=(\gamma^1,...,\gamma^N)$ with $U^i_t=\gamma^i(X^i_{[0,t]})$.
We are also interested in the limit
\begin{equation}\label{eq:J-defDec}
    J(\underline \gamma)=\limsup_{N\to\infty} J_N(\underline \gamma).
\end{equation}

Note that here we consider a system where there is coupling in the dynamics but not in the cost.

Following steps similar to that in the proof of Theorem \ref{theorem:main}, consider the expression
\begin{equation}\label{RobustnessExpTermRef2}
L:=\inf_{\underline \gamma\in \Gamma} \limsup_{T\to\infty} \frac{1}{T} \frac{1}{N} \log E^{\underline \gamma}_{\mu_0^{N,T}} \left[e^{N c^T_N(\omega^{[1,N]},U^{[1,N]})}\right].
\end{equation}
Using Donsker-Varadhan (see \cite{BK}), we have the variational expression
\begin{equation*}
L= \inf_{\underline \gamma\in \Gamma}\limsup_{T \to\infty} \frac{1}{T} \frac{1}{N} \sup_{\mu^N\sim \mu_0^N}\left\{ E^{\underline \gamma}_\mu[c_N(\omega^{[1,N]},U^{[1,N]})]  -\frac{1}{N}D_{KL}(\mu^N||\mu_0^N)\right\}.
\end{equation*}
For each fixed $\gamma \in \Gamma$ and $\mu^{N,T} \sim \mu_0^{N,T}$,
\begin{eqnarray}
&&\limsup_{T\to\infty} \frac{1}{T} \frac{1}{N} \log E^{\underline \gamma}_{\mu_0^{N,T}} \left[e^{N c^T_N(\omega^{[1,N]},U^{[1,N]})}\right] \nonumber\\
&&\geq  \limsup_{T \to\infty} \frac{1}{T} \frac{1}{N} \left\{ E^{\underline \gamma}_\mu[c^T_N(\omega^{[1,N]},U^{[1,N]})]  - D_{KL}(\mu^{N,T}||\mu_0^{N,T})\right\}. \nonumber \\
&&\geq \limsup_{T\to\infty} \frac{1}{T} \frac{1}{N}   \left\{ E^{\underline \gamma}_\mu[c^T_N(\omega^{[1,N]},U^{[1,N]})] \right\} - \limsup_{T\to\infty} \frac{1}{T} \left\{\frac{1}{N}D_{KL}(\mu^{N,T}||\mu_0^{N,T}) \right\}.
\end{eqnarray}
This leads to, for each $\gamma \in \Gamma$
\begin{equation*}
\limsup_{T\to\infty} \frac{1}{T} \frac{1}{N}\left\{  E^{\underline \gamma}_\mu[c^T_N(\omega^{[1,N]},U^{[1,N]})] \right\} \leq \limsup_{T\to\infty} \frac{1}{T} \frac{1}{N} \log E^{\underline \gamma}_{\mu_0^{N,T}} \left[e^{N c^T_N(\omega^{[1,N]},U^{[1,N]})}\right] + \limsup_{T\to\infty} \frac{1}{T} \frac{1}{N}D_{KL}(\mu^{N,T}||\mu_0^{N,T})
\end{equation*}
where we again note that $\mu$ depends on the policy. As the above applies for each admissible $\gamma$, it implies that
\begin{equation*}
\inf_{\underline \gamma\in \Gamma}\limsup_{T\to\infty} \frac{1}{T} \frac{1}{N} \left\{  E^{\underline \gamma}_\mu[c^T_N(\omega^{[1,N]},U^{[1,N]})] \right\} \leq L + \limsup_{T\to\infty} \frac{1}{T} \frac{1}{N}D_{KL}(\mu_{\gamma^*}^{N,T}||\mu_{0,\gamma^*}^{N,T}),
\end{equation*}
where $\gamma^*$ is a policy attaining $L$.

For this system, due to the independence of the measures under $\mu_0^{N,T}$, the risk-sensitive criterion 
\[ \limsup_{T \to \infty} \frac{1}{T} \frac{1}{N}\log\bigg( E^{\gamma}[e^{\sum_{i=1}^N\int_0^T c(X^i_s,U^i_s)ds}] \bigg)\]
reduces to the minimization (via the properties of the logarithm) of
\[ \limsup_{T \to \infty} \frac{1}{T}\log\bigg( E^{\gamma}[e^{\int_0^T c(X^i_s,U^i_s)ds}] \bigg),\]
for each of the decoupled particles.
The solution to this minimization, under growth and regularity conditions on $b$ and $\sigma$, has been reported in \cite[Theorems 4.1 and 4.2]{arapostathis2019strict}, and is obtained via the following HJB equation:
\begin{align}\label{HJBRSK}
 \min_{u \in \mathbb{U}} \bigg\{ {\cal A}\psi(x,u) + c(x,u)\psi(x) \bigg\} = \lambda^* \psi(x), \quad \psi(0)=1, 
 \end{align}
where ${\cal A}$ is the generator of the controlled diffusion given in {\bf System Model A} (with identical particle dynamics) and $\lambda^*$ is the optimal cost (known as the principal eigenvalue of the HJB equation above). Let $\gamma^*$ solve the HJB equation. In particular, $\gamma^*$ is a stationary Markov policy.

Accordingly, the robustness bound obtained leads to the following.
\begin{theorem}
Consider the coupled model (\ref{trueModel}) and cost criterion \ref{eq:J-defDec}. The following holds
\[\inf_{\gamma} J(\underline \gamma) \leq \lambda^* + \frac{1}{2\sigma^2}  \lim_{T \to \infty} \frac{1}{T} E^{\underline \gamma^*}_{\mu}\left[\int_0^T v_s^2ds\right],\]
where $\lambda^*$ is given in (\ref{HJBRSK}). The inequality holds also for the control policy obtained via (\ref{HJBRSK}) for every $x$.
\end{theorem}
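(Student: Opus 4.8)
The plan is to instantiate the robustness chain that was derived just above the theorem, specialised to the decoupled‑cost mean‑field model with identical particles, and then to evaluate both terms on its right‑hand side in closed form. For each admissible $\underline\gamma$ we already have
\[
\limsup_{T\to\infty}\tfrac1T\tfrac1N E^{\underline\gamma}_\mu[c^T_N(\omega^{[1,N]},U^{[1,N]})]\;\le\;L+\limsup_{T\to\infty}\tfrac1T\tfrac1N D_{KL}(\mu^{N,T}_{\gamma^\ast}\|\mu^{N,T}_{0,\gamma^\ast}),
\]
with $L=\inf_{\underline\gamma}\limsup_{T\to\infty}\tfrac1T\tfrac1N\log E^{\underline\gamma}_{\mu_0^{N,T}}[e^{Nc^T_N}]$ and $\gamma^\ast$ a policy attaining $L$. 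Since $c^T_N=\tfrac1N\sum_{i=1}^N\int_0^T g(X^i_s,U^i_s)\,ds$ is decoupled and the reference dynamics are decoupled, the pairs $(X^i,U^i)$ are independent under $\mu_0^{N,T}$, so $E_{\mu_0}[e^{Nc^T_N}]=\prod_{i=1}^N E[e^{\int_0^T g(X^i_s,U^i_s)ds}]$ and, by identical‑particle symmetry, $\tfrac1N\log E_{\mu_0}[e^{Nc^T_N}]=\log E[e^{\int_0^T g(X^1_s,U^1_s)ds}]$; consequently every quantity in the chain is independent of $N$, and the outer $\limsup_{N\to\infty}$ defining $J$ is vacuous. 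This reduces the whole problem to a single‑particle computation.

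First I would identify $L$ with $\lambda^\ast$. By the factorization above, $L$ equals the single‑particle infinite‑horizon risk‑sensitive ergodic cost $\inf_{\gamma}\limsup_{T\to\infty}\tfrac1T\log E^{\gamma}[e^{\int_0^T g(X_s,U_s)ds}]$ for the decoupled diffusion of \textbf{System Model A} (with the interchange of $\inf_{\underline\gamma}$ and the product over agents, and the vanishing of any advantage from centralized information, being immediate here because both cost and reference dynamics split across agents). Under (A1)–(A2) and the structural conditions on $g$ used in \cite{arapostathis2019strict}, this value is exactly the principal eigenvalue $\lambda^\ast$ of \eqref{HJBRSK}, attained by the stationary Markov policy $\gamma^\ast$ solving \eqref{HJBRSK}; this is \cite[Theorems 4.1 and 4.2]{arapostathis2019strict}. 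Hence the policy attaining $L$ may be taken to be $\underline\gamma^\ast=(\gamma^\ast,\dots,\gamma^\ast)$, which is also why the final assertion — that the bound holds for the HJB‑optimal policy, for every initial condition $x$ — is not a separate claim but a direct reading of the per‑$\gamma$ inequality at $\gamma=\gamma^\ast$.

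Next I would evaluate the relative‑entropy term. Applying Lemma \ref{lem:robust-kld} on each finite horizon $[0,T]$ with equal, constant diffusion coefficients $\sigma_i\equiv\sigma$ and symmetric interaction terms gives $\tfrac1N D_{KL}(\mu^{N,T}\|\mu_0^{N,T})=\tfrac{1}{2\sigma^2}E_{\mu^{N,T}}\big[\int_0^T v_s^2\,ds\big]$, where $v$ is the common single‑particle interaction term. Dividing by $T$, taking $\limsup_{T\to\infty}$ (an honest limit under the ergodicity induced by $\gamma^\ast$), and substituting into the chain yields
\[
\inf_{\underline\gamma}J(\underline\gamma)\;\le\;\lambda^\ast+\frac{1}{2\sigma^2}\,\lim_{T\to\infty}\frac1T\,E^{\underline\gamma^\ast}_{\mu}\Big[\int_0^T v_s^2\,ds\Big],
\]
which is the asserted bound; carrying out the same computation with $\underline\gamma=\underline\gamma^\ast$ fixed (rather than taking the infimum) gives the last sentence of the theorem.

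The step I expect to be the main obstacle is the rigorous identification $L=\lambda^\ast$ in the infinite‑horizon regime: one must verify that the hypotheses of \cite{arapostathis2019strict} — blanket stability/ergodicity of the controlled diffusion, near‑monotonicity (or boundedness) of $g$, finiteness of the exponential moments defining the risk‑sensitive value, and the verification argument attached to \eqref{HJBRSK} — are met by \textbf{System Model A} under (A1)–(A2), and that the exchange of $\limsup_T$ with the per‑agent average in $L$ is legitimate (which is where existence of the relevant Cesàro/ergodic limits enters). Secondary technical points are the existence of $\lim_{T\to\infty}\tfrac1T E_\mu[\int_0^T v_s^2\,ds]$ under the closed‑loop law of $\gamma^\ast$, and the uniform‑in‑$T$ validity of the Girsanov change of measure (finite energy on each $[0,T]$) that underlies Lemma \ref{lem:robust-kld}; these are expected to follow from the same stability estimates but should be stated explicitly.
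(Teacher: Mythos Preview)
Your proposal is correct and follows essentially the same approach as the paper: you instantiate the per-$\gamma$ robustness inequality derived immediately before the theorem, use independence under the reference measure to factor the risk-sensitive cost into a single-particle problem whose optimal value is $\lambda^\ast$ via \cite[Theorems~4.1--4.2]{arapostathis2019strict}, and then evaluate the relative-entropy term through Lemma~\ref{lem:robust-kld} under the symmetric constant-$\sigma$ assumption. The technical caveats you flag (ergodic limits, verification hypotheses for the HJB equation, finite energy per horizon) are precisely the points the paper also relies on implicitly without spelling out further.
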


Thus, we are able to obtain an explicit bound, for all $N$, by approximating a coupled problem with one that is not coupled.

\section{Conclusion}
   In this paper, we showed that a decentralized or mean-field control problem with interacting agents can be approached by considering a risk-sensitive version of non-interacting agents/particles or particles with structural coupling (such as symmetric), leading to a robust formulation. 
   
   In our first approach, we perturbed the noise on the infinite product space directly. This has the benefit of allowing for general cost criteria at the drawback of the finite energy condition defined in equation \eqref{eq:Novikov} that in particular does not allow for symmetric interaction.

   In the second approach, we perturbed the law of the agents on the finite product space. This eliminates the need for the finite energy condition defined in equation \eqref{eq:Novikov} and allows for symmetric interaction.  

   
   We obtained bounds on the value of the cost function in terms of the risk-sensitive cost function for the noninteracting case plus a term involving the strength of the interaction characterized by relative entropy. We emphasize the mathematical flexibility afforded when the interaction is symmetric, where in the absence of symmetry a bounded energy condition is needed whereas in the symmetric setup this is not.

\section*{Acknowledgments} We would like to gratefully acknowledge Alex Dunlap for a discussion on optimal risk sensitivity which led to Section \ref{sec:optimal-risk-sensitivity}.
\bibliographystyle{plain}
\bibliography{bibliography,SerdarBibliography}

\begin{thebibliography}{10}

\bibitem{anantharam2017variational}
V.~Anantharam and V.~S. Borkar.
\newblock A variational formula for risk-sensitive reward.
\newblock {\em SIAM Journal on Control and Optimization}, 55(2):961--988, 2017.

\bibitem{arapostathis2019strict}
A.~Arapostathis, A.~Biswas, and S.~Saha.
\newblock Strict monotonicity of principal eigenvalues of elliptic operators in
  rd and risk-sensitive control.
\newblock {\em Journal de Math{\'e}matiques Pures et Appliqu{\'e}es},
  124:169--219, 2019.

\bibitem{arapostathis2012ergodic}
A.~Arapostathis, V.~S. Borkar, and M.~K. Ghosh.
\newblock {\em Ergodic Control of Diffusion Processes}, volume 143.
\newblock Cambridge University Press, 2012.

\bibitem{basbern}
T.~Ba\c{s}ar and P.~Bernhard.
\newblock {\em H-infinity Optimal Control and Related Minimax Design Problems:
  A Dynamic Game Approach}.
\newblock Birkh\"auser, Boston, MA, 1995.

\bibitem{bacsar2021robust}
T.~Ba{\c{s}}ar.
\newblock Robust designs through risk sensitivity: An overview.
\newblock {\em Journal of Systems Science and Complexity}, 34(5):1634--1665,
  2021.

\bibitem{benevs1971existence}
V.~E. Bene{\v{s}}.
\newblock Existence of optimal stochastic control laws.
\newblock {\em SIAM Journal on Control}, 9(3):446--472, 1971.

\bibitem{bensoussan2017risk}
A.~Bensoussan, B.~Djehiche, H.~Tembine, and P.~Yam.
\newblock Risk-sensitive mean-field-type control.
\newblock In {\em 2017 IEEE 56th Annual Conference on Decision and Control
  (CDC)}, pages 33--38. IEEE, 2017.

\bibitem{BK}
Joris Bierkens and Hilbert~J. Kappen.
\newblock Explicit solution of relative entropy weighted control.
\newblock {\em Systems Control Lett.}, 72:36--43, 2014.

\bibitem{Bogachev}
Vladimir~I. Bogachev.
\newblock {\em Gaussian measures}, volume~62 of {\em Mathematical Surveys and
  Monographs}.
\newblock American Mathematical Society, Providence, RI, 1998.

\bibitem{borkar1989topology}
V.~S. Borkar.
\newblock A topology for {M}arkov controls.
\newblock {\em Applied Mathematics and Optimization}, 20(1):55--62, 1989.

\bibitem{carmona2018probabilistic}
R.~Carmona and F.~Delarue.
\newblock {\em Probabilistic theory of mean field games with applications
  I-II}.
\newblock Springer, 2018.

\bibitem{carmona2016mean}
R.~Carmona, F.~Delarue, and D.~Lacker.
\newblock Mean field games with common noise.
\newblock {\em The Annals of Probability}, 44(6):3740--3803, 2016.

\bibitem{charalambous2016decentralized}
C.~D. Charalambous.
\newblock Decentralized optimality conditions of stochastic differential
  decision problems via {G}irsanov's measure transformation.
\newblock {\em Mathematics of Control, Signals, and Systems}, 28(3):1--55,
  2016.

\bibitem{de2021backward}
P.-E.~C. de~Raynal and N.~Frikha.
\newblock From the backward kolmogorov pde on the wasserstein space to
  propagation of chaos for mckean-vlasov sdes.
\newblock {\em Journal de Math{\'e}matiques Pures et Appliqu{\'e}es},
  156:1--124, 2021.

\bibitem{dupuis2000robust}
P.~G. Dupuis, M.~R. James, and I.~Petersen.
\newblock Robust properties of risk-sensitive control.
\newblock {\em Mathematics of Control, Signals and Systems}, 13(4):318--332,
  2000.

\bibitem{Limit-of-alpha}
Hans F\"{o}llmer and Thomas Knispel.
\newblock Entropic risk measures: coherence vs. convexity, model ambiguity, and
  robust large deviations.
\newblock {\em Stoch. Dyn.}, 11(2-3):333--351, 2011.

\bibitem{Hairer-Notes}
Martin Hairer.
\newblock An introduction to stochastic pdes, 2009.

\bibitem{huang2022general}
J.~Huang, G.~Wang, and W.~Wang.
\newblock A general linear quadratic stochastic control and information value.
\newblock {\em Journal of Mathematical Analysis and Applications}, page 126486,
  2022.

\bibitem{CainesMeanField3}
M.~Huang, P.~E. Caines, and R.~P. Malham\'e.
\newblock Large population stochastic dynamic games: closed-loop
  {M}ckean-{V}lasov systems and the {N}ash certainty equivalence principle.
\newblock {\em Communications in Information and Systems}, 6:221--251, 2006.

\bibitem{CainesMeanField2}
M.~Huang, P.~E. Caines, and R.~P. Malham\'e.
\newblock Large-population cost-coupled {L}{Q}{G} problems with nonuniform
  agents: Individual-mass behavior and decentralized $\epsilon$-{N}ash
  equilibria.
\newblock {\em IEEE Transactions on Automatic Control}, 52:1560--1571, 2007.

\bibitem{Girsanov-path-1}
Pierre-Emmanuel Jabin and Zhenfu Wang.
\newblock Mean field limit and propagation of chaos for {V}lasov systems with
  bounded forces.
\newblock {\em J. Funct. Anal.}, 271(12):3588--3627, 2016.

\bibitem{Girsanov-path-2}
Pierre-Emmanuel Jabin and Zhenfu Wang.
\newblock Quantitative estimates of propagation of chaos for stochastic systems
  with {$W^{-1,\infty}$} kernels.
\newblock {\em Invent. Math.}, 214(1):523--591, 2018.

\bibitem{jackson2023approximately}
J.~Jackson and D.~Lacker.
\newblock Approximately optimal distributed stochastic controls beyond the mean
  field setting.
\newblock {\em arXiv preprint arXiv:2301.02901}, 2023.

\bibitem{jacobson1973optimal}
D.~Jacobson.
\newblock Optimal stochastic linear systems with exponential performance
  criteria and their relation to deterministic differential games.
\newblock {\em IEEE Transactions on Automatic control}, 18(2):124--131, 1973.

\bibitem{lacker2015mean}
D.~Lacker.
\newblock Mean field games via controlled martingale problems: existence of
  {M}arkovian equilibria.
\newblock {\em Stochastic Processes and their Applications}, 125(7):2856--2894,
  2015.

\bibitem{lacker2017limit}
D.~Lacker.
\newblock Limit theory for controlled {M}c{K}ean--{V}lasov dynamics.
\newblock {\em SIAM Journal on Control and Optimization}, 55(3):1641--1672,
  2017.

\bibitem{LyonsMeanField}
J.~M. Lasry and P.~L. Lions.
\newblock Mean field games.
\newblock {\em Japanese J. of Mathematics}, 2:229--260, 2007.

\bibitem{Girsanov-path-3}
Tau~Shean Lim, Yulong Lu, and James~H. Nolen.
\newblock Quantitative propagation of chaos in a bimolecular chemical
  reaction-diffusion model.
\newblock {\em SIAM J. Math. Anal.}, 52(2):2098--2133, 2020.

\bibitem{Girsanov-path-4}
F.~Malrieu.
\newblock Logarithmic {S}obolev inequalities for some nonlinear {PDE}'s.
\newblock {\em Stochastic Process. Appl.}, 95(1):109--132, 2001.

\bibitem{moon2016linear}
J.~Moon and T.~Ba{\c{s}}ar.
\newblock Linear quadratic risk-sensitive and robust mean field games.
\newblock {\em IEEE Transactions on Automatic Control}, 62(3):1062--1077, 2016.

\bibitem{dupuis2000kernel}
I.~Petersen, M.~R. James, and P.~Dupuis.
\newblock Minimax optimal control of stochastic uncertain systems with relative
  entropy constraints.
\newblock {\em IEEE Transactions on Automatic Control}, 45(3):398--412, 2000.

\bibitem{pradhan2022near}
S.~Pradhan and S.~Y{\"u}ksel.
\newblock Continuity of cost in {B}orkar control topology and implications on
  discrete space and time approximations for controlled diffusions under
  several criteria.
\newblock {\em Electronic Journal of Probability}, 29:1--32, 2024.

\bibitem{pradhanyuksel2023DTApprx}
Somnath Pradhan and Serdar Y{\"u}ksel.
\newblock Controlled diffusions under full, partial and decentralized
  information: Existence of optimal policies and discrete-time approximations.
\newblock {\em arXiv preprint arXiv:2311.03254}, 2023.

\bibitem{saldi2020approximate}
N.~Saldi, T.~Ba\c{s}ar, and M.~Raginsky.
\newblock Approximate markov-nash equilibria for discrete-time risk-sensitive
  mean-field games.
\newblock {\em Mathematics of Operations Research}, 45(4):1596--1620, 2020.

\bibitem{saldi2022partially}
N.~Saldi, T.~Ba{\c{s}}ar, and M.~Raginsky.
\newblock Partially observed discrete-time risk-sensitive mean field games.
\newblock {\em Dynamic Games and Applications}, pages 1--32, 2022.

\bibitem{saldiyukselGeoInfoStructure}
N.~Saldi and S.~Y\"uksel.
\newblock Geometry of information structures, strategic measures and associated
  control topologies.
\newblock {\em Probability Surveys}, 19:450--532, 2022.

\bibitem{sanjari2021optimality}
S.~Sanjari, T.~Ba{\c{s}}ar, and S.~Y{\"u}ksel.
\newblock Isomorphism properties of optimality and equilibrium solutions under
  equivalent information structure transformations: Stochastic dynamic games
  and teams.
\newblock {\em SIAM Journal on Control and Optimization}, 61(5):3102--3130,
  2023.

\bibitem{sanjari2020optimality}
S.~Sanjari, N.~Saldi, and S.~Y{\"u}ksel.
\newblock Optimality of independently randomized symmetric policies for
  exchangeable stochastic teams with infinitely many decision makers.
\newblock {\em Mathematics of Operations Research}, 48(3):1254--1285, 2023.

\bibitem{sanjarisaldiy2024optimality}
S.~Sanjari, N.~Saldi, and S.~Y{\"u}ksel.
\newblock Optimality of decentralized symmetric policies for stochastic teams
  with mean-field information sharing.
\newblock {\em arXiv preprint arXiv:2404.04957}, 2024.

\bibitem{sanjarisaldiyuksel2023CTMF}
Sina Sanjari, Naci Saldi, and Serdar Y{\"u}ksel.
\newblock Decentralized exchangeable stochastic dynamic teams in
  continuous-time, their mean-field limits and optimality of symmetric
  policies.
\newblock {\em arXiv preprint arXiv:2404.15581}, 2024.

\bibitem{tembine2013risk}
H.~Tembine, Q.~Zhu, and T.~Ba{\c{s}}ar.
\newblock Risk-sensitive mean-field games.
\newblock {\em IEEE Transactions on Automatic Control}, 59(4):835--850, 2013.

\bibitem{wit68}
H.~S. Witsenhausen.
\newblock A counterexample in stochastic optimal control.
\newblock {\em SIAM J. Contr.}, 6:131--147, 1968.

\bibitem{wit88}
H.~S. Witsenhausen.
\newblock Equivalent stochastic control problems.
\newblock {\em Math. Control, Signals and Systems}, 1:3--11, 1988.

\bibitem{YukselWitsenStandardArXiv}
S.~Y\"uksel.
\newblock A universal dynamic program and refined existence results for
  decentralized stochastic control.
\newblock {\em SIAM Journal on Control and Optimization}, 58(5):2711--2739,
  2020.

\bibitem{YukselBasarBook24}
S.~Y\"uksel and T.~Ba\c{s}ar.
\newblock {\em Stochastic Teams, Games, and Control under Information
  Constraints}.
\newblock Springer, Cham, 2024.

\bibitem{YukselSaldiSICON17}
S.~Y\"uksel and N.~Saldi.
\newblock Convex analysis in decentralized stochastic control, strategic
  measures and optimal solutions.
\newblock {\em SIAM J. on Control and Optimization}, 55:1--28, 2017.

\end{thebibliography}

\end{document}